\documentclass[12pt]{amsart}

\usepackage{amscd,amssymb}
\usepackage[all]{xy}
\usepackage[plainpages,backref,urlcolor=blue]{hyperref}
\usepackage[utf8]{inputenc}
\usepackage{mathtools}
\DeclarePairedDelimiter{\floor}{\lfloor}{\rfloor}

\theoremstyle{plain}
\newtheorem{thm}[subsection]{Theorem}
\newtheorem{lem}[subsection]{Lemma}
\newtheorem{prop}[subsection]{Proposition}
\newtheorem{cor}[subsection]{Corollary}

\theoremstyle{definition}
\newtheorem{rk}[subsection]{Remark}

\newtheorem{ex}[subsection]{Example}
\newtheorem{conj}[subsection]{Conjecture}

\numberwithin{equation}{section}
\newcommand{\OO}{\mathcal O}

\newcommand{\al}{\alpha}
\newcommand{\be}{\beta}

\newcommand{\PPP}{\mathcal P}

\newcommand{\Q}{\mathbb Q}

\newcommand{\C}{\mathbb C}
\newcommand{\PP}{\mathbb P}
\newcommand{\N}{\mathbb N}

\DeclareMathOperator{\reg}{reg}

\DeclareMathOperator{\dd}{d}

\begin{document}

\title[Graded Betti numbers of surfaces]{Graded Betti numbers of the Jacobian algebra of surfaces in $\PP^3$}

\author[Alexandru Dimca]{Alexandru Dimca$^1$}
\address{Universit\'e C\^ote d'Azur, CNRS, LJAD, France and Simion Stoilow Institute of Mathematics,
P.O. Box 1-764, RO-014700 Bucharest, Romania}
\email{Alexandru.Dimca@univ-cotedazur.fr}

\author[Gabriel Sticlaru]{Gabriel Sticlaru}
\address{Faculty of Mathematics and Informatics,
Ovidius University
Bd. Mamaia 124, 900527 Constanta,
Romania}
\email{gabriel.sticlaru@gmail.com }

\thanks{$^1$ partial support from the project ``Singularities and Applications'' - CF 132/31.07.2023 funded by the European Union - NextGenerationEU - through Romania's National Recovery and Resilience Plan.}

\subjclass[2010]{Primary 14H50; Secondary  14B05, 13D02, 32S22}

\keywords{Jacobian ideal, Jacobian algebra, exponents,  Tjurina numbers, graded Betti numbers}

\begin{abstract}

We compute an explicit
closed formula for the Hilbert polynomial of the Jacobian algebra $M(f)$ of a reduced  surface $X:f=0$ in $\PP^3$ in terms of the graded Betti numbers of the algebra $M(f)$. When $X$ has only isolated singularities, two results by A. du Plessis and C. T. C. Wall  yield new necessary conditions for a set of positive integers to be the graded Betti numbers of the Jacobian algebra of such a surface.
The comparison with the plane curve case is discussed in detail and additional information is given in the case of nodal surfaces. A natural conjecture on the  smallest  4 exponents of $X$ is stated and support for it is provided.
In the final section we construct four natural Jacobian syzygies for surfaces $X$ coming from pencils of surfaces.
\end{abstract}

\maketitle

%%%%%%%%%%%%%%%%%%%%%%%%%%%%%%%%%%%%%%%%%%%%%%%%%%%%
\section{Introduction}
%%%%%%%%%%%%%%%%%%%%%%%%%%%%%%%%%%%%%%%%%%%%%%%%%%%%

Let $S=\C[x,y,z,t]$ be the polynomial ring in four variables $x,y,z,t$ with complex coefficients, and let $X:f=0$ be a reduced surface of degree $d\geq 3$ in the complex projective space $\PP^3$. 
We denote by $J_f$ the Jacobian ideal of $f$, i.e. the homogeneous ideal in $S$ spanned by the partial derivatives $f_x,f_y,f_z,f_t$ of $f$, and  by $M(f)=S/J_f$ the corresponding graded quotient ring, called the Jacobian (or Milnor) algebra of $f$.
Consider the general form of the minimal resolution of the Milnor algebra $M(f)$
of a reduced surface $X:f=0$ 
\begin{equation}
\label{res2A}
0 \longrightarrow \bigoplus_{k=1}^{r} S(1-d-b_k)
 \longrightarrow \bigoplus_{j=1}^{q} S(1-d-c_j)
   \longrightarrow \bigoplus_{i=1}^{p} S(1-d-d_i)
   \longrightarrow S^4(1-d)
   \longrightarrow S.
\end{equation}
where $p \geq 3$, $q \geq 0$ and $r \geq 0$.
We call the ordered sequence of degrees 
$${\bf d}=(d_1, \ldots, d_p), \  
{\bf c}=(c_1, \ldots, c_q) 
 \text{ and } {\bf b}=(b_1, \ldots, b_r)$$ 
  the {\it graded Betti numbers of the Jacobian algebra} $M(f)$, since they determine and are determined by the usual
graded Betti numbers of the Jacobian algebra $M(f)$ as defined for instance in \cite{Eis}. An interesting question is to find many, ideally all, independent relations satisfied by these Betti numbers, see Remark \ref{rk13} below.

It is known that there is a unique polynomial $P(M(f))(u) \in \Q[u]$, called the {\it Hilbert polynomial} of $M(f)$, and an integer $k_0\in \N$ such that
\begin{equation}
\label{Hpoly}
 \dim M(f)_k= P(M(f))(k)
\end{equation}
for all $k \geq k_0$. We denote by $\Sigma$ the singular subscheme of $X$, which is defined by the Jacobian ideal $J(f)$. The general theory of Hilbert polynomials says that the degree of  $P(M(f))$ is given by the dimension of the support of  $\OO_{\Sigma}$, the coherent sheal associated to the graded $S$-module $ M(f)$. Hence the assumption $\dim \Sigma=0$ implies that the polynomial
$P(M(f))$ is a constant, namely the total Tjurina number of $X$, given by
\begin{equation}
\label{ab0}
P(M(f))=\tau(X)=\sum_{s \in \Sigma} \tau(X,s),
\end{equation}
where $\tau(X,s)$ denotes the Tjurina number of the isolated singularity $(X,s)$, and
$\dim \Sigma=1$ implies that 
\begin{equation}
\label{ab1}
P(M(f))(u)=au+b,
\end{equation}
where $a=\deg(\Sigma)$, the degree of the subscheme $\Sigma$. 

The first main result of this note is the following computation of the Hilbert polynomial $P(M(f))$ in terms of the graded Betti numbers of the Jacobian algebra $M(f)$
introduced in \eqref{res2A}. In the claim (3) below, as well as in the sequel of this paper, we use  the convention that $u_v$ means $u$ is repeated $v$ times in a list of Betti numbers.

 \begin{thm}
\label{thm1}
For the minimal resolution \eqref{res2A} of the Jacobian algebra $M(f)$ of a reduced surface $X:f=0$ of degree $d$ in $\PP^3$, one has the following.
\begin{enumerate}

\item For any such surface, one has 
$$p+r=q+3 \text{ and }  \sum_{i=1}^pd_i- \sum_{j=1}^qc_j+ \sum_{k=1}^rb_k=d-1.$$

\item The surface $X$ has at most isolated singularities if and only if
$$(d-1)^2+\sum_{i=1}^pd_i^2-\sum_{j=1}^qc_j^2+\sum_{k=1}^rb_k^2=0.$$
If this is the case, then the total Tjurina number of $X$ is given by the formula
$$6\tau(X)=(d-1)^3-\sum_{i=1}^pd_i^3+\sum_{j=1}^qc_j^3-\sum_{k=1}^rb_k^3.$$

\item The surface $X$ is smooth if and only if $p=6$, $q=4$, $r=1$ and
$${\bf d}=(d-1)_6, \ {\bf c}=(2(d-1))_4 \text{ and }  {\bf b}=3(d-1).$$
\item If the surface $X:f=0$ has a 1-dimensional singularity subscheme $\Sigma$, then the Hilbert polynomial $P(M(f))$ of the  Jacobian algebra $M(f)$ is given by
$$P(M(f))(u)=\frac{A}{2}u -B,$$
where
$$A=  (d-1)^2+\sum_{i=1}^pd_i^2-\sum_{j=1}^qc_j^2+\sum_{k=1}^rb_k^2     $$
and
$$B= \frac{d(d-3)^2-4}{3}+\frac{(d-3)}{2}\left(\sum_{i=1}^pd_i^2-\sum_{j=1}^qc_j^2+\sum_{k=1}^rb_k^2 \right)+\frac{1}{6}\left(\sum_{i=1}^pd_i^3-\sum_{j=1}^qc_j^3+\sum_{k=1}^rb_k^3\right).  $$

\end{enumerate}

\end{thm}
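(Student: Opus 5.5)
The plan is to read off the Hilbert series of $M(f)$ from the resolution \eqref{res2A} and compare it with the known shape of the Hilbert polynomial. Since \eqref{res2A} is exact, for every $k$ we have
$$\dim M(f)_k=\binom{k+3}{3}-4\binom{k+4-d}{3}+\sum_{i}\binom{k+4-d-d_i}{3}-\sum_j\binom{k+4-d-c_j}{3}+\sum_k\binom{k+4-d-b_k}{3},$$
where for $k$ large each binomial $\binom{k+3-e}{3}$ is the polynomial $\frac{(k+3-e)(k+2-e)(k+1-e)}{6}$. So $P(M(f))(u)$ is this alternating sum of cubic polynomials in $u$. Put $e_0=0$, $e=d-1$ (four times) and $e=d-1+d_i,\,d-1+c_j,\,d-1+b_k$ with signs $+,-,+,-,+$. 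Expanding $\binom{u+3-e}{3}$ in powers of $u$, the coefficients of $u^3,u^2,u,1$ in $P$ are, up to fixed universal factors, linear combinations of the signed power sums $\sigma_m=\sum \pm e^m$ for $m=0,1,2,3$.

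The key step is to impose that $\deg P\le 1$, since $\dim\Sigma\le 1$ for a reduced surface. Vanishing of the $u^3$ coefficient gives $\sigma_0=0$, i.e. $1-4+p-q+r=0$, which is $p+r=q+3$. Given that, vanishing of the $u^2$ coefficient gives $\sigma_1=0$. Writing each shifted degree as $d-1+x$ and using $\sigma_0=0$, this becomes $-4(d-1)+\sum(d-1+d_i)-\dots=0$, which reduces to $\sum d_i-\sum c_j+\sum b_k=d-1$. This proves (1).

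Next I compute the $u$ coefficient, which is a multiple of $\sigma_2$ once $\sigma_0=\sigma_1=0$. Expanding $(d-1+x)^2$ and using (1), one gets $\sigma_2=-(d-1)^2 \cdot$ sign $-\left[(d-1)^2+\sum d_i^2-\sum c_j^2+\sum b_k^2\right]$ up to sign, i.e. $\pm A$. Hence $P$ is constant, equivalently $\dim\Sigma\le 0$, iff $A=0$. That gives the first part of (2), and the linear coefficient $A/2$ in (4). The constant term of $P$ is a combination of $\sigma_2$ and $\sigma_3$. With $\sigma_0=\sigma_1=0$ we have $\binom{u+3-e}{3}=\frac16(-e)^3+\dots$, so the constant term is $-\sigma_3/6+\sigma_2\cdot c(u\text{-shift})$.

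The main obstacle is the bookkeeping in expanding $\sigma_3$ in terms of $d_i,c_j,b_k$: the terms $3(d-1)^2\sum\pm x$ and $3(d-1)\sum\pm x^2$ must be rewritten using (1) and $A$. I will do this carefully and verify it against the smooth case, where $\tau=0$, and against $B$. When $A=0$, the result gives $6\tau=(d-1)^3-\sum d_i^3+\sum c_j^3-\sum b_k^3$ via \eqref{ab0}. When $A\neq0$, keeping the $\sigma_2$ contributions yields the formula for $B$.

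For (3), one direction comes from the Koszul complex: $X$ is smooth iff the partials form a regular sequence, and then \eqref{res2A} is the Koszul resolution with the stated Betti numbers. Conversely, if the Betti numbers are as stated, formula (2) gives $A=(d-1)^2(1+6-16+9)=0$ and $6\tau=(d-1)^3(1-6+32-27)=0$. So $X$ has isolated singularities with $\tau(X)=0$, hence $X$ is smooth.
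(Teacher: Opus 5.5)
Your proposal is correct and is essentially the paper's proof: you read the Hilbert function off \eqref{res2A} as an alternating sum of binomials, the $u^3$ and $u^2$ coefficients vanish because $\dim\Sigma\le 1$, the $u$ coefficient detects $\dim\Sigma$, the constant term gives $\tau(X)$ via \eqref{ab0}, and (3) follows from the Koszul complex together with (2). Your signed power sums are a cleaner packaging of Lemma~\ref{lem2}; note that once $\sigma_0=\sigma_1=0$ one gets exactly $6P(u)=3\sigma_2(u+2)-\sigma_3$, with $\sigma_2=A$ (there is no sign ambiguity) and $\sigma_3=-(d-1)^3+3(d-1)A+\sum_i d_i^3-\sum_j c_j^3+\sum_k b_k^3$, and this gives both the formula for $6\tau(X)$ and the stated $B$.
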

When the surface $X$ has only isolated singularities, that is when $\dim \Sigma =0$, then using two results by A. du Plessis and C.T.C. Wall quoted below in Theorem \ref{thmC} and Theorem \ref{thmD}, we obtain the following new restrictions on the
graded Betti numbers of the Jacobian algebra $M(f)$.

\begin{cor}
\label{corC} 
If the surface $X$ has  isolated singularities,
then
$$5(d-1)^3-6d_1(d-1)^2 \leq  -\sum_{i=1}^pd_i^3+\sum_{j=1}^qc_j^3-\sum_{k=1}^rb_k^3 \leq 5(d-1)^3-6d_1(d-d_1-1)(d-1).$$
Moreover, if $d_1+d_2 \ne d-1$, in particular if $d_1>(d-1)/2$, then the last inequality is strict.
\end{cor}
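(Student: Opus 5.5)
The plan is to reduce everything to bounds on the total Tjurina number $\tau(X)$ and then quote du Plessis and Wall. Since $X$ has isolated singularities, Theorem \ref{thm1}(2) gives
$$6\tau(X)=(d-1)^3-\sum_{i=1}^pd_i^3+\sum_{j=1}^qc_j^3-\sum_{k=1}^rb_k^3.$$
Substituting this into the claimed double inequality and dividing by $6$, the statement becomes equivalent to
$$(d-1)^2(d-1-d_1)\ \leq\ \tau(X)\ \leq\ (d-1)^3-d_1(d-1)(d-d_1-1).$$

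The next step is to identify $d_1$ with the minimal degree $\mathrm{mdr}(f)$ of a Jacobian syzygy. In the resolution \eqref{res2A}, the summands $S(1-d-d_i)$ give a minimal system of generators of the syzygy module of $(f_x,f_y,f_z,f_t)$. The $d_i$ are ordered increasingly, so $d_1$ is exactly the smallest degree of a nonzero relation $af_x+bf_y+cf_z+et=0$ with coefficients of degree $d_1$. Once this is done, the two bounds above are precisely the lower bound of Theorem \ref{thmC} and the upper bound of Theorem \ref{thmD}, with $r=\mathrm{mdr}(f)=d_1$.

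The first point to be careful about is the range of validity of each du Plessis--Wall inequality. The upper bound is typically stated for all $r$, but it is sharpest (or only stated) for $r\leq (d-1)/2$, with a different expression when $r>(d-1)/2$. I would check that in each range the quoted bound implies the weaker quadratic-in-$d_1$ bound written in Corollary \ref{corC}. When $2r>d-1$, this should follow from an elementary comparison of the two polynomials in $r$.

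The second point is the strictness claim. I expect Theorem \ref{thmD} to state that equality in the upper bound forces a second independent syzygy of degree $d-1-r$, i.e. a strong ``free/nearly free'' type structure. In our notation that degree is $d_2=d-1-d_1$, so equality forces $d_1+d_2=d-1$. Contrapositively, $d_1+d_2\neq d-1$ gives strict inequality. If $d_1>(d-1)/2$, then $d_2\geq d_1>(d-1)/2$ gives $d_1+d_2>d-1$, so this case is included.

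The main obstacle is matching the exact hypotheses and equality cases of Theorems \ref{thmC} and \ref{thmD} with the generator degrees $d_1\leq d_2$. In particular, one must justify that the extremal syzygy of degree $d-1-d_1$ appearing there is a minimal generator, hence equals some $d_i$ and necessarily $d_2$. For this I would argue that it cannot be a multiple of the degree-$d_1$ syzygy, because of the degree comparison and the isolated singularity hypothesis.
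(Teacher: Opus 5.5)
Your proof is correct and is the paper's argument: substitute $6\tau(X)$ from Theorem \ref{thm1}(2) into the du Plessis--Wall bounds, and obtain strictness from the equality criterion together with $d_2\geq d_1$. One correction on references: in the paper, Theorem \ref{thmC} contains both bounds, valid for every $d_1$, and the criterion ``upper bound attained iff $d_1+d_2=d-1$'' stated directly in terms of the two smallest exponents; Theorem \ref{thmD} is the separate inequality $d_1+d_2\geq d-1$. So your range check and your minimal-generator concern are unnecessary.
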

Theorem \ref{thm1} is proved in Section 2 and Corollary \ref{corC} is proved in Section 3. In
Remark \ref{rk13} we recall first the known restrictions on the Betti numbers of a surface. Then we shows the interest of Corollary \ref{corC} in deciding if a set of positive integers may be the graded Betti numbers of a surface $X$ with isolated singularities.

In Section 4 a comparison with the plane curve case is discussed in detail. Surprizingly, many facts holding for the graded Betti numbers of curves fail in the case of surfaces, see for instance Propositions \ref{propT0} and \ref{propT}, Corollary \ref{corT} in relation with the type $t(X)$ of the surface $X$ which is defined by
$$t(X)=d_1+d_2+d_3-d+1$$
in analogy to the curve case studied in \cite{ADP,Type3}.
In particular, Theorem \ref{thmD} implies that for a surface of degree $d$ with isolated singularities one has
$$t(X) \geq \frac{d-1}{2},$$
see for details Proposition \ref{propT0} below.

In Proposition \ref{propN} we give information on the exponents of a nodal surface $X$ using Hodge theory. This result in particular implies that for such a surface $X$ one has $$t(X) =2(d-1).$$
As explained below in Section 4, by analogy to the curve case, we put forth the following.
 \begin{conj}
\label{C1}
 For any reduced surface $X:f=0$ of degree $d$ which is not free one has
 $$d_1 \leq d_2 \leq d_3 \leq d_4 \leq d-1.$$
  \end{conj}
In Remark \ref{rkC1} we list 3 classes of surfaces for which this conjecture holds. Moreover, in Proposition \ref{propC1} we prove a weaker result, namely that one has
 $$d_1 \leq d_2 \leq d_3  \leq d-1$$
for any surface.

In Section 5 we collect a number of additional examples showing the differences between the curve and the surface cases. All the minimal resolutions  corresponding to \eqref{res2A} are computated in this note using the
Computer Algebra softwares CoCoA \cite{CoCoA} and SINGULAR \cite{Singular}.

In the final section we construct four natural Jacobian syzygies for surfaces $X$ coming from pencils of surfaces, similar to the case of plane curves described in \cite[Section 4]{Mich}. This construction allows us to construct surfaces $X$ having the type 
$$t(X)=d_1+d_2+d_3+1-d<-P,$$ where $P$ is an arbitrary large positive number, see Example \ref{ex6.1}. We would like to thank Piotr Pokora for bringing this example to our attention.

\section{Proof of Theorem \ref{thm1}}

We start with the following two Lemmas, whose proofs are elementary and straightforward, so we leave them to the reader.
\begin{lem}
\label{lem1}
For any integer $a$ and $k > |a|$, one has
$$\dim S_{k+a}=\binom{k+a+3}{3}=\frac{k^3+3(a+2)k^2+(3a^2+12a+11)k+a^3+6a^2+11a+6}{6}.$$
\end{lem}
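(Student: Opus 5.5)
The plan is to use the standard formula for the dimension of degree-$n$ polynomials in four variables and then expand. The monomials of degree $n$ in $x,y,z,t$ are counted by stars and bars, so $\dim S_n=\binom{n+3}{3}$ for every integer $n\geq 0$. We apply this with $n=k+a$. The hypothesis $k>|a|$ guarantees $k+a>0$, so the binomial formula applies and no correction term for negative degrees is needed. This gives the first equality.

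For the second equality we write
$$\binom{k+a+3}{3}=\frac{(k+a+1)(k+a+2)(k+a+3)}{6}$$
and regard the numerator as a cubic polynomial in $k$. Setting $m=a+2$, the three factors are $k+m-1$, $k+m$ and $k+m+1$. Their product is therefore
$$(k+m)\bigl((k+m)^2-1\bigr)=(k+m)^3-(k+m).$$
Expanding this yields
$$k^3+3mk^2+(3m^2-1)k+m^3-m.$$
We then substitute back $m=a+2$ and check each coefficient:
\begin{itemize}
\item the coefficient of $k^2$ is $3(a+2)$;
\item the coefficient of $k$ is $3(a+2)^2-1=3a^2+12a+11$;
\item the constant term is $(a+2)^3-(a+2)=a^3+6a^2+11a+6$.
\end{itemize}
Dividing by $6$ gives exactly the claimed expression.

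There is no genuine obstacle here. The only point requiring care is the sign condition that makes $\dim S_{k+a}$ equal to the binomial coefficient, which the assumption $k>|a|$ handles. In fact the polynomial identity itself holds for all $k$.
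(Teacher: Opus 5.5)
Your proof is correct and is exactly the elementary verification the paper leaves to the reader: stars and bars gives $\dim S_{k+a}=\binom{k+a+3}{3}$, since $k>|a|$ forces $k+a\geq 1$. The substitution $m=a+2$ then turns $(k+a+1)(k+a+2)(k+a+3)$ into $(k+m)^3-(k+m)$, and expanding this gives the stated coefficients.
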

Using the resolution \eqref{res2A}, we get the following
\begin{equation}
\label{res2C1}
 \dim M(f)_{s+d-1}= \dim S_{s+d-1}-4 \dim S_s+\sum_{i=1}^p\dim S_{s-d_i}-\sum_{j=1}^q\dim S_{s-c_j}+\sum_{k=1}^r\dim S_{s-b_k},
\end{equation}
for any $s$ large enough.
Using now Lemma \ref{lem1}, we get the following.

\begin{lem}
\label{lem2}
With the above notation, for any large integer $s$, one has the equality
 $$6 \dim M(f)_{s+d-1}=$$
 $$=(p-q+r-3)s^3+3(d-1-d_1-d_2-d_3-\sum_{i=4}^p(d_i-2)+\sum_{j=1}^q(c_j-2)-\sum_{k=1}^r(b_k-2))s^2+
 $$
 $$+  (3d^2+6d+2-44+\sum_{i=1}^p(3d_i^2-12d_i+11)-  \sum_{j=1}^q(3c_j^2-12c_j+11) +\sum_{k=1}^r(3b_k^2-12b_k+11) )s+ $$
 $$+d^3+3d^2+2d-24-\sum_{i=1}^p(d_i^3-6d_i^2+11d_i-6)+\sum_{j=1}^q(c_j^3-6c_j^2+11c_j-6)-\sum_{k=1}^r(b_k^3-6b_k^2+11b_k-6).$$
\end{lem}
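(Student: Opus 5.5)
The plan is a direct substitution: insert the cubic expansion of Lemma \ref{lem1} into each term of \eqref{res2C1}, multiply by $6$, and collect powers of $s$. We take $s$ larger than $d-1$ and than all the $d_i,c_j,b_k$, so that Lemma \ref{lem1} applies to every term. Write $\phi_a(s)=6\dim S_{s+a}=(s+a+1)(s+a+2)(s+a+3)$.

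The terms to expand are as follows.
\begin{enumerate}
\item For $a=d-1$, Lemma \ref{lem1} gives
$$\phi_{d-1}(s)=s^3+3(d+1)s^2+\bigl(3(d-1)^2+12(d-1)+11\bigr)s+d(d+1)(d+2).$$
Here $3(d-1)^2+12(d-1)+11=3d^2+6d+2$ and $d(d+1)(d+2)=d^3+3d^2+2d$.
\item For $a=0$ we get $-4\phi_0(s)=-4s^3-24s^2-44s-24$. This produces the constants $-44$ and $-24$ appearing in the statement.
\item For $a=-d_i$ (and likewise $-c_j$, $-b_k$, with signs $+,-,+$) we get
$$\phi_{-d_i}(s)=s^3+3(2-d_i)s^2+(3d_i^2-12d_i+11)s-(d_i^3-6d_i^2+11d_i-6).$$
\end{enumerate}

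Summing these terms with the signs from \eqref{res2C1} gives the four coefficients.
\begin{enumerate}
\item The $s^3$ coefficient is $1-4+p-q+r=p-q+r-3$.
\item The $s$ coefficient is $3d^2+6d+2-44$ plus the signed sums of $3u^2-12u+11$ over $u\in\{d_i,c_j,b_k\}$, exactly as displayed.
\item The constant term is $d^3+3d^2+2d-24-\sum_i(d_i^3-6d_i^2+11d_i-6)+\sum_j(\cdots)-\sum_k(\cdots)$, again as displayed.
\end{enumerate}

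The only step needing any care is the $s^2$ coefficient, because of how the statement groups it. The raw sum is
$$3(d+1)-24+\sum_{i=1}^p3(2-d_i)-\sum_{j=1}^q3(2-c_j)+\sum_{k=1}^r3(2-b_k).$$
Since $p\ge 3$, we split off the terms $i=1,2,3$. Their constants contribute $3\cdot 3\cdot 2=18$, and $3(d+1)-24+18=3(d-1)$. This yields
$$3\Bigl(d-1-d_1-d_2-d_3-\sum_{i\ge4}(d_i-2)+\sum_j(c_j-2)-\sum_k(b_k-2)\Bigr),$$
which is exactly the claimed form. Apart from this regrouping, everything is routine bookkeeping, and no real obstacle is expected.
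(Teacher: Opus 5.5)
Your proposal is correct and is exactly the computation the paper leaves to the reader. You substitute the cubic expansion of Lemma~\ref{lem1} into \eqref{res2C1}, multiply by $6$ and collect powers of $s$; your regrouping of the $s^2$ coefficient (splitting off $i=1,2,3$ using $p\ge 3$, so that $3(d+1)-24+18=3(d-1)$) reproduces the displayed form, and the $s^3$, $s$ and constant coefficients also check out.
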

Using these computations, we can prove Theorem \ref{thm1} as follows.
By definition of the Hilbert polynomial $P(M(f))$ we have
$$6P(M(f))(s+d-1)=6 \dim M(f)_{s+d-1}.$$
Since the surface $X$ is reduced, we have
$$\deg P(M(f))=\dim \Sigma \leq 1,$$
and hence the coefficients of $s^3$ and of $s^2$ in Lemma \ref{lem2} must vanish. This proves the claim (1).

To prove the claim (2), we note that $X$ has at most isolated singularities if and only if $\dim \Sigma <1$.  This last condition is equivalent to the vanishing of the coefficients of $s$ in Lemma \ref{lem2}, in addition to the vanishings from the claim (1).
When all these vanishing holds, then we conclude the proof of claim (2) by using \eqref{ab0} and the expression of the constant term in 
 Lemma \ref{lem2}, simplified by using the equalities in (1) and the first equality in (2).
 
 To prove the claim (3), assume first that $X:f=0$ is smooth. Then the partial derivatives $f_x,f_y,f_z$ and $f_t$ form a regular sequence in $S$ and the resolution of $M(f)$ is well known in this case, and has the form
 $$0 \to S(4-4d) \to S(3-3d)^4 \to S(2-2d)^6 \to S(1-d)^4 \to S.$$
It follows that  ${\bf d}$, $ {\bf c}$ and $ {\bf b}$ are given by the equalities in claim (3) when $X$ is smooth. Conversely, if ${\bf d}$, $ {\bf c}$ and $ {\bf b}$ are given by the equalities in claim (3), then using the claim (2) we see that $X$ has at most isolated singularities and that $\tau(X)=0$. Therefore the surface $X$ is smooth.

The proof of claim (4) follows directly from \eqref{ab1} and  Lemma \ref{lem2}, where $s$ has to be replaced by $u-(d-1)$.

%%%%%%%%%%%%%%%%%%%%%%%%%%%%%%%%%%%%%%%%%%%%%%%%%%%%

%%%%%%%%%%%%%%%%%%%%%%%%%%%%%%%%%%%%%%%%%%%%%%%%%%%%
\section{Proof of Corollary \ref{corC} and some remarks}
%%%%%%%%%%%%%%%%%%%%%%%%%%%%%%%%%%%%%%%%%%%%%%%%%%%%

One has the following result, see \cite[Theorem 5.3]{duPCTC01}.
\begin{thm}
\label{thmC}
If the surface $X$ has at most isolated singularities,
then
$$(d-1)^3-d_1(d-1)^2 \leq \tau(X) \leq (d-1)^3-d_1(d-d_1-1)(d-1).$$
Moreover the upper bound for $\tau(X)$ is attained if and only if 
$$d_1+d_2=d-1.$$
\end{thm}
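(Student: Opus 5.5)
The statement is Theorem \ref{thmC}, which is quoted from \cite[Theorem 5.3]{duPCTC01}. If I had to reprove it, I would use linkage of zero-dimensional schemes. Since $\dim \Sigma=0$ and $\tau(X)=\deg \Sigma$, I would change coordinates so that $g_1,g_2,g_3$ are three general linear combinations of $f_x,f_y,f_z,f_t$ and $g_4$ is the remaining one.

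Then $I=(g_1,g_2,g_3)$ is a complete intersection. It defines a finite scheme $Z\subset \PP^3$ of degree $(d-1)^3$ with $\Sigma\subset Z$, and $J_f=I+(g_4)$. Let $Z'$ be the scheme linked to $\Sigma$ by $Z$, defined by the saturation of $I:J_f=I:g_4$. Since $\Sigma$ has no embedded components, linkage gives
$$\tau(X)=\deg\Sigma=(d-1)^3-\deg Z'.$$
Both bounds therefore become statements about $\deg Z'$, to be controlled through the minimal syzygy degree $d_1$.

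\textbf{Lower bound.} A syzygy $\sum a_i g_i=0$ of minimal degree $d_1$ gives $a_4 g_4\in I$, so $a_4\in I:g_4$. By genericity of the coordinate change, $a_4\ne 0$ and $a_4\notin I$. Indeed, $a_4\in I$ in degree $d_1<d-1$ is impossible unless $a_4=0$, and if $d_1\ge d-1$ the bound is trivial. Hence $Z'\subset V(g_1,g_2,a_4)$, a complete intersection of degree $d_1(d-1)^2$, provided $a_4$ is not a zero divisor on $V(g_1,g_2)$. This I would again arrange by genericity, or else bound $\deg Z'$ by $d_1(d-1)^2$ directly. This gives $\deg Z'\le d_1(d-1)^2$, which is the lower bound for $\tau(X)$.

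\textbf{Upper bound.} Minimality of $d_1$ means that $(I:g_4)_k=I_k$ for all $k<d_1$. This holds because any element $b\in (I:g_4)_k$ gives a syzygy $bg_4-\sum b_ig_i=0$ of degree $k$, and a syzygy of degree $<d_1$ must be Koszul-trivial modulo $I$. I would then use the standard liaison formula relating the Hilbert functions of $\Sigma$ and $Z'$ via the symmetric $h$-vector of the Gorenstein scheme $Z$:
$$h_{Z'}(k)=h_Z(\sigma-k)-h_\Sigma(\sigma-k),\qquad \sigma=3(d-2)$$
(up to the usual shift). The vanishing of $(I:g_4)/I$ below degree $d_1$ forces the first differences of $h_{Z'}$ to agree with those of $h_Z$ in degrees $<d_1$. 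Summing, $\deg Z'$ is at least the number of monomials of degree $<d_1$ counted in the $h$-vector of a complete intersection of three forms of degree $d-1$, with the symmetric truncation at degree $d-1-d_1$ taken into account. I expect this count to evaluate to $d_1(d-d_1-1)(d-1)$, which yields the upper bound.

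The main obstacle is this upper bound together with its equality case. It needs a careful $h$-vector computation, and the equality case needs an argument that $\deg Z'=d_1(d-d_1-1)(d-1)$ forces $Z'$ to be the complete intersection $V(g_1,a_4,b)$ with $\deg b=d-1-d_1$. That in turn should produce a second independent syzygy of degree $d-1-d_1$, i.e.\ $d_2=d-1-d_1$. For the converse, syzygies of degrees $d_1$ and $d-1-d_1$ should cut out exactly such a complete intersection. I would first handle the extremal case by comparing Hilbert functions term by term, mimicking the curve case in $\PP^2$ treated by du Plessis and Wall.
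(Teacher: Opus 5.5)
The paper contains no proof of this statement. It is quoted from du Plessis--Wall \cite[Theorem 5.3]{duPCTC01}, so your argument has to stand on its own. Your linkage set-up is sound: $I=(g_1,g_2,g_3)$ is saturated, hence so is $I:J_f=I:g_4$, and $\tau(X)=(d-1)^3-\deg Z'$. The lower bound also works after one repair. You cannot simply ``arrange by genericity'' that $a_4$ is a non-zero-divisor on $V(g_1,g_2)$, because $a_4$ and $g_1,g_2$ come from the same change of basis. Instead, use $Z'\subset V(h_1,h_2,a_4)$ with $h_1,h_2$ general in the net $I_{d-1}$. Since the base locus $Z$ of this net is finite, $V(h_1,h_2,a_4)$ is then a complete intersection of degree $d_1(d-1)^2$.

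The genuine gap is the upper bound, and with it the equality case. The only input you use is $(I:g_4)_k=0$ for $k<d_1$, i.e.\ $h_{Z'}(k)=\binom{k+2}{2}$ for $k<d_1$. Since $h_Z(k)=\binom{k+2}{2}$ for all $k\le d-2$, your count of monomials of degree $<d_1$ equals $\binom{d_1+2}{3}$, not $d_1(d-1-d_1)(d-1)$. The other end of the linkage formula $h_{Z'}(k)=h_Z(\sigma-k)-h_\Sigma(\sigma-k)$ adds at most another $\binom{d_1+2}{3}$. This holds when it is combined with $H_\Sigma\le H_{S/J_f}$ in the range where $AR(f)$ is known to vanish. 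For $d_1=1$ you therefore get $\deg Z'\ge 2$ instead of the required $(d-1)(d-2)$. A bound independent of $d$ cannot work. Two things are missing. The factor $d-1$ must come from a surface of degree $d-1$. The syzygy $\rho_1$ must be used as a section, not only through the vanishing of syzygies below degree $d_1$. Your equality argument, which forces $Z'=V(g_1,a_4,b)$, rests on the same unproven count.

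Concretely, you can run the plane curve argument on a general member $W=V(g_1)$ of the net.
\begin{itemize}
\item In degrees $k<d-1$, the syzygies of $(\bar g_2,\bar g_3,\bar g_4)$ over $S/(g_1)$ correspond exactly to $AR(f)_k$.
\item Their sheaf $F$ on $W$ has rank $2$, $c_1(F)=-(d-1)H$ and $\deg c_2(F)=(d-1)^3-\tau(X)$.
\item For general $W$, $\rho_1$ gives a nonzero section of $F(d_1)$ with finitely many zeros; this uses that $\rho_1$ is primitive.
\item Its zero scheme has length $(d-1)^3-\tau(X)-d_1(d-1-d_1)(d-1)\ge 0$, which is the upper bound.
\item If this length is $0$, then $F(d_1)$ splits because $H^1(\OO_W(k))=0$. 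This gives a second syzygy of degree $d-1-d_1$, and Theorem \ref{thmD} forces $d_2=d-1-d_1$.
\item Conversely, if $d_1+d_2=d-1$, the determinant of $\rho_1,\rho_2$ on $W$ is a constant section of $\OO_W$, nonzero for general $W$. Hence $F\cong\OO_W(-d_1)\oplus\OO_W(-d_2)$ and $\tau(X)=(d-1)^3-d_1d_2(d-1)$.
\end{itemize}
One case remains: the points of $\Sigma$ where the Hessian matrix of $f$ vanishes. Every $W$ is singular there, so the Chern class count must be replaced by a local computation.
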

Related to the last claim, one should also recall  the following key result, see \cite[Lemma 5.2]{duPCTC01}.
\begin{thm}
\label{thmD}
If the surface $X$ of degree $d$ has at most isolated singularities,
then
$$d_1+d_2 \geq d-1,$$
where $d_1$ and $d_2$ are the smallest two exponents of $X$.
\end{thm}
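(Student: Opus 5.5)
The plan is to argue by contradiction, using the Euler derivation together with the two lowest-degree Jacobian syzygies to build a vector field that must be proportional to $\nabla f$ along $X$. A degree count then rules this out when $d_1+d_2<d-1$. Throughout, the hypothesis that $X$ has isolated singularities gives two facts. First, $X$ is irreducible, since two components of a surface in $\PP^3$ meet along a curve of singular points. Second, $X$ is normal, since it is a hypersurface (so Cohen–Macaulay) with singular locus of codimension $2$, and Serre's criterion applies.

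Let $\rho_1=(a_1,\dots,a_4)$ and $\rho_2$ be minimal generators of the syzygy module of $(f_x,f_y,f_z,f_t)$ of degrees $d_1\le d_2$. Let $E=(x,y,z,t)$, so that $E\cdot\nabla f=d\,f$ by Euler's formula. First I check that $E,\rho_1,\rho_2$ are linearly independent over the fraction field of $S$. The syzygies $\rho_1,\rho_2$ are independent: if they were proportional, then after removing the gcd of the coefficients one would get a syzygy of which both are multiples, contradicting minimality. Now suppose $gE=a\rho_1+b\rho_2$. Applying both sides to $\nabla f$ gives $g\,d\,f=0$, so $g=0$. Let $M$ be the $3\times 4$ matrix with rows $E,\rho_1,\rho_2$, and let $\Delta=(\Delta_1,\dots,\Delta_4)$ be its vector of signed maximal minors. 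Then $\Delta\neq 0$, $M\Delta=0$, and each $\Delta_i$ is homogeneous of degree $1+d_1+d_2$. We have $M\nabla f=(df,0,0)^T$. Restricting to $X$ gives $M\nabla f\equiv 0 \bmod f$, so $\nabla f$ lies in the kernel of $M$ at each point of $X$ where $M$ has rank $3$. Algebraically, via the Cramer/Plücker identity $f_i\Delta_j-f_j\Delta_i\in (f)$, the vectors $\nabla f$ and $\Delta$ are proportional along $X$.

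Now suppose $d_1+d_2\le d-2$. Then $\deg\Delta_i\le d-1<d$, so some $\Delta_i$ is not divisible by $f$. Since the $f_j$ have no common zero on $X$ outside finitely many points, the identities above define a rational function $h=\Delta_i/f_i=\Delta_j/f_j$ on $X$. It is regular off the finite set $\mathrm{Sing}(X)$, hence regular on all of $X$ by normality, and it is a section of $\OO_X(d_1+d_2+2-d)$. If $d_1+d_2+2-d<0$, this section must vanish, so $\Delta\equiv 0 \bmod f$. Since $\deg\Delta<d$, this forces $\Delta=0$, a contradiction.

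The remaining case is $d_1+d_2=d-2$, and this is the step I expect to need the most care. Here $h$ is a nonzero constant $c$, since $X$ is irreducible and projective. Then $\Delta-c\nabla f$ vanishes on $X$ and has degree $d-1<d$, so $\Delta=c\nabla f$ identically. Pairing with $E$ gives $E\cdot\Delta=\det(E,E,\rho_1,\rho_2)=0$, while $E\cdot c\nabla f=c\,d\,f\neq 0$, a contradiction. Hence $d_1+d_2\ge d-1$.

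The main technical points to verify are the following. One is the precise proportionality $\nabla f\wedge\Delta\equiv 0 \bmod f$, obtained by expanding $4\times 4$ determinants with a repeated row. The other is the normality/extension argument that makes $h$ a global regular section of a line bundle of nonpositive degree on $X$.
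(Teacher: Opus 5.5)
Your proof is correct. Since the paper gives no proof of this statement and only imports it from du Plessis--Wall \cite[Lemma 5.2]{duPCTC01}, your argument is a self-contained replacement rather than a variant of something in the text. The steps hold up:
\begin{itemize}
\item the independence of $E,\rho_1,\rho_2$ over $K$;
\item the identity $f_i\Delta_j-f_j\Delta_i=\pm d\,f\det(e_i,e_j,\rho_1,\rho_2)$, which is the five-term Laplace/Pl\"ucker relation combined with $E\cdot\nabla f=df$ and $\rho_k\cdot\nabla f=0$;
\item the extension of $h$ across $\mathrm{Sing}(X)$, where the $S_2$ property (Cohen--Macaulayness of $X$) already suffices;
\item the two cases, where for $d_1+d_2=d-2$ the relation $E\cdot\Delta=0$ forces $c=0$ and hence $\Delta=0$.
\end{itemize}
Add one line for indices with $f_i=0$ identically: the identity gives $f_j\Delta_i\in(f)$ for some $f_j\neq 0$, hence $\Delta_i\in(f)$ because $f$ is irreducible.

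For comparison, there is a shorter argument that stays in $S$. Since $V(J_f)\subset\PP^3$ is finite, $J_f$ has grade $3$ in $S$, so the Koszul complex of $(f_x,f_y,f_z,f_t)$ has $H_2=0$. The element $\rho_1\wedge\rho_2\in\Lambda^2S^4$ is a Koszul cycle, because $\partial(\rho_1\wedge\rho_2)=(\rho_1\cdot\nabla f)\rho_2-(\rho_2\cdot\nabla f)\rho_1=0$. Hence $\rho_1\wedge\rho_2=\partial\eta$ for some $\eta\in\Lambda^3S^4$ whose coefficients have degree $d_1+d_2-(d-1)$. Since $\rho_1\wedge\rho_2\neq 0$, this forces $d_1+d_2\geq d-1$.

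Your argument is essentially the dual of this, carried out on $X$. The conclusion $\bar\Delta=h\nabla f$ over $A=S/(f)$ is exactness at $A^4$ of the Koszul cocomplex $A\to A^4\to\Lambda^2A^4$ (maps $h\mapsto h\nabla f$ and $v\mapsto \nabla f\wedge v$). That exactness is what $\mathrm{grade}(J_fA,A)=2$ provides, and this grade condition is the algebraic content of your normality/extension step.

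What each route buys:
\begin{itemize}
\item The Koszul route is shorter, needs neither the Euler field nor a case split, and works verbatim for hypersurfaces in $\PP^n$, $n\geq 3$, as asserted in Remark \ref{rkC10}. It also shows that in the extremal case $d_1+d_2=d-1$ one has $\rho_1\wedge\rho_2=\partial\eta$ with $\eta$ constant.
\item Your route, as written, is tied to $\PP^3$, since exactly three rows are needed to form the minor vector $\Delta$. In return it gives a geometric picture: $\Delta$ equals $h\nabla f$ along $X$, with $h\in H^0(X,\OO_X(d_1+d_2+2-d))$.
\end{itemize}
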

As an application of Theorem \ref{thmD}, see Proposition \ref{propT0} below.

To prove Corollary \ref{corC} it is enough to replace in Theorem \ref{thmC} $\tau(X)$ by the value of it given by Theorem \ref{thm1} (2), multiply by 6 and simplify one term $(d-1)^3$.

In the case of plane curves, the corresponding result was obtained in \cite{duPCTC}, 
and played a key role in the understanding of free curves. Indeed, the reduced curve $C$ is free if and only if 
$$\tau(C)=(d-1)^2-d_1(d-d_1-1),$$
i.e. the upper bound is attained, see \cite{Dmax, E} for related results.
A free surface $X$ has necessarily non-isolated singularities,
and so freeness must be related to other invariants, see for instance
\cite{DmaxS}.

\begin{rk}
\label{rkC} 
The lower bound in Theorem \ref{thmC} is attained for any pair $(d,d_1)$.
Indeed, it is enough to find a degree $d$, reduced curve $C: f'(x,y,z)=0$ such that $d_1$ is the minimal exponent of $C$ and
$$\tau(C)=(d-d_1-1)(d-1),$$
and then take $X:f=0$, with 
$$f(x,y,z,t)=f'(x,y,z)+t^d.$$
The existence of curves $C$ as above is shown in \cite[Example 4.5]{3syz} and a complete characterization of them is given in \cite[Theorem 3.5 (1)]{3syz}.
\end{rk}

\begin{rk}
\label{rkC2} 
The upper bound in Theorem \ref{thmC} is attained for any pair $(d,d_1)$ with $2d_1<d$, since for such pairs $(d,d_1)$ the existence of free plane curves $C:f'=0$ of degree $d$ and with exponents $(d_1,d_2)$ is shown in
\cite{DStExpo} and then one constructs the surface $X$ as in Remark \ref{rkC} above.
It is an interesting {\it open question} to improve the upper bound in Theorem \ref{thmC} when $2d_1 \geq d$. The best upper bound for such pairs is (at least conjecturally) known in the case of plane curves, see  \cite{duPCTC, maxTjurina}, and is given by the stronger inequality
$$\tau(C) \leq (d-1)^2-d_1(d-d_1-1)-{2d_1+2-d \choose 2}.$$
If  we start with a degree $d$ and a reduced curve $C: f'(x,y,z)=0$ such that $d_1 \geq d/2$ and
 take $X:f=0$, with $$f(x,y,z,t)=f'(x,y,z)+t^d,$$
 then $f$ and $f'$ have the same minimal exponent $d_1$ and
 \begin{equation} 
\label{betterb} 
\tau(X) \leq (d-1)^3-d_1(d-d_1-1)(d-1)-{2d_1+2-d \choose 2}(d-1).
\end{equation}  
However, this stronger inequality fails for surfaces not constructed as suspensions of plane curves, as the following examples show.
\end{rk}
\begin{ex}
\label{ex10}
Consider the Cayley surface
$$X:f=xyz+xyt+xzt+yzt =0$$
in $\PP^3$ having four $A_1$-singularities. Then $d=3$, $d_1=2 >d/2$ and 
$\tau(X)=4$. Indeed, the minimal resolution of the Jacobian algebra is given by
$$0 \to S[-6]^2 \to S[-5]^8 \to S[-4]^9 \to S[-2]^4 \to S$$
and hence 
$${\bf d}=(2_9), \ {\bf c} =(3_8) \text{ and } {\bf b}=(4_2).$$
The inequality in Theorem \ref{thmC} is in this case
$0 \leq \tau(X) \leq 8,$
while the bound given by \eqref{betterb} is $2$, which is clearly not good.
\end{ex}

\begin{ex}
\label{ex20}
Consider the Kummer surface
$$X:f=x^4+y^4+z^4+t^4-y^2 z^2-z^2 x^2-x^2 y^2-x^2 t^2-y^2 t^2-z^2 t^2 =0$$
in $\PP^3$ having sixteen $A_1$-singularities. Then $d=4$, $d_1=3 >d/2$ and 
$\tau(X)=16$. Indeed, the minimal resolution of the Jacobian algebra is given by
$$0 \to S(-8)^3\to S(-7)^{12} \to S(-6)^{12} \to S(-3)^4\to S\to 0$$
and hence 
$${\bf d}=(3_{12}), \ {\bf c} =(4_{12}) \text{ and } {\bf b}=(5_3).$$
The inequality in Theorem \ref{thmC} is in this case
$0 \leq \tau(X) \leq 27,$
while the bound given by \eqref{betterb} is $15$, which is clearly not good.
\end{ex}
\begin{ex}
\label{ex30}
Consider finally the octic  surface with 144 nodes 
$$X:f=16(x^8+y^8+z^8+t^8)+224(x^4y^4+x^4z^4+x^4t^4+y^4z^4+y^4t^4+z^4t^4)+$$
$$+
2688x^2y^2z^2t^2-9(x^2+y^2+z^2+t^2)^4=0.$$
This is a special case of Chebyshev hypersurfaces, which are classical examples of
nodal hypersurfaces with many singularities. They were introduced by Chmutov to construct complex projective
hypersurfaces with a large number of nodes, see \cite{AGV} Vol. 2, p. 419, \cite{Chm} as well as \cite[Corollary 3.2, (iii)]{DStChm}.
The minimal resolution of the Jacobian algebra is given by
$$0\to S(-20)^4 \oplus S(-22) \to S(-17)^4 \oplus S(-18)^{13} \to S(-14)^6\oplus S(-16)^9\to S(-7)^4\to S$$
and hence 
$${\bf d}=(7_6,9_9), \ {\bf c} =(10_4,11_{13}) \text{ and } {\bf b}=(13_4,15).$$
The inequality in Theorem \ref{thmC} is in this case
$0 \leq \tau(X)=144 \leq 343,$
while the bound given by \eqref{betterb} is $147$, which is also good.
This example is a motivation for Proposition \ref{propN} below.
\end{ex}

\begin{rk}
\label{rk13} 
The graded Betti numbers $\bf d$, $\bf c$ and $\bf b$ satisfy some obvious relations.
First, one clearly has
\begin{equation}
\label{eqR1}
 d_1=\min _i d_i \geq 0,
\end{equation}
with equality if and only if $X$ is a cone over a plane curve.
If $q>0$, that is if $X$ is not a free surface, a secondary syzygy must involve at least three primary syzygies.
Indeed, let $r_1$ and $r_2$ be part of a minimal generating set of the first order syzygies of $f$. Here minimal means that both the numbers of the generators and their degrees are minimal. Assume that a minimal degree second order
syzygy has the form
$$a_1r_1+a_2r_2=0,$$
with $a_1,a_2$ both nonzero.
Then $\deg a_1 >0$, since otherwise the generator $r_1$ is not needed.
Moreover, $a_1$ and $a_2$ are relatively prime, since otherwise we may simplify by any common factor and get a second order syzygy of strictly lower degree. Since $a_1$ and $a_2$ are relatively prime, it follows that
$a_1$ divides all the components of $r_2\in S^4$. Hence $r_2$ is not a primitive syzygy, and hence it can be replaced by a syzygy $r_2'$  of strictly lower degree. This contradiction proves our claim that any second order syzygy involves at least three first order syzygies $r_1,r_2$ and $r_3$.
Using  the minimality of the resolution \eqref{res2A} and the fact that the Betti numbers are ordered, namely
$$d_1 \leq d_2 \leq \ldots \leq d_p, $$
we deduce that
\begin{equation}
\label{eqR2}
 c_1=\min _j c_j >\max\{d_1,d_2,d_3\}.
\end{equation}
If $r>0$, we have in a similar way that $q \geq 3$ and
\begin{equation}
\label{eqR3}
 b_1=\min _k b_k >\max\{c_1,c_2,c_3\}.
\end{equation}
When the singular surface $X$ of degree $d\geq 3$ has at most isolated singularities, we also have that the  Castelnuovo-Mumford regularity of $M(f)$ satisfies
$\reg M(f) \leq 3(d-3)$, see  \cite[Proposition 2.3]{Hess}. In terms of graded Betti numbers this yields the following inequalities
\begin{equation}
\label{eqR4}
 d_p=\max _i d_i \leq 3(d-2), \ c_q=\max c_j \leq 3(d-2)+1 \text{ and } b_r=\max b_k \leq 3(d-2)+2.
\end{equation}
 We note that \eqref{eqR4} is not an arithmetical consequence of the equalities (1) and (2) in Theorem \ref{thm1} and of the inequalities \eqref{eqR1},  \eqref{eqR2} and  \eqref{eqR3}. Indeed, if we choose
$${\bf d}=(2_3,18) \text{ and } {\bf c}=(19),$$
then the equalities (1) and (2) in Theorem \ref{thm1} are satisfied for $d=6$ and clearly the inequalities \eqref{eqR1},  \eqref{eqR2} and  \eqref{eqR3} also hold. On the other hand, the first inequality \eqref{eqR4} fails for this choice, since $d_p=18>12$.
In this example one has $d_1+d_2=2+2 <d-1=5$, hence Theorem \ref{thmD} can also be used to show that a surface $X$ with such Betti numbers cannot have isolated singularities.

 For surfaces with isolated singularities, one may ask if the inequality in Corollary \ref{corC} is a consequence of the equalities (1) and (2) in Theorem \ref{thm1} and of the inequalities \eqref{eqR1},  \eqref{eqR2},  \eqref{eqR3} and  \eqref{eqR4}.
If we choose ${\bf d}=(2_2,3,4)$ and ${\bf c}=(7)$,
then the equalities (1) and (2) in Theorem \ref{thm1} are satisfied for $d=5$. With this choice, the inequalities \eqref{eqR1},  \eqref{eqR2},  \eqref{eqR3} and  \eqref{eqR4}, as well as the inequality in Theorem \ref{thmD}, are also satisfied. On the other hand, for this choice of $\bf b$ and $\bf c$, a direct computation shows that Corollary \ref{corC} yields  the inequalities
$$128 \leq 236 \leq 224.$$
It follows that there is no surface $X$ having only isolated singularities and having as graded Betti numbers of its Jacobian algebra the sequences
$${\bf d}=(2_2,3,4) \text{ and }{\bf c}=(7).$$
\end{rk}

\section{Comparison to the curve case}
Let $R=\C[x,y,z]$ and 
consider  the general form of the minimal resolution of the Milnor algebra $M(f')$
of a reduced plane curve $C:f'=0$, which is assumed not to be free
\begin{equation}
\label{res2A2}
0 \longrightarrow \bigoplus_{j=1}^{q'} R(1-d-c'_j)
   \longrightarrow \bigoplus_{i=1}^{p'} R(1-d-d'_i)
   \longrightarrow R^3(1-d)
   \longrightarrow R,
\end{equation}
with $c'_1\leq ...\leq c'_{q'}$ and $d'_1\leq ...\leq d'_{p'}$, where $d=\deg f'$, see for instance \cite{HS}. One has
$$p'=q'+2,$$
which corresponds to the first equality in Theorem \ref{thm1} (1) above.
It follows from \cite[Lemma 1.1]{HS} that
\begin{equation}
\label{res2B}
 \epsilon_j=  d'_{j+2} -c'_j \ge 1
\qquad j = 1,\dots,q'.
\end{equation}

Using \cite[Formula (13)]{HS}, one obtains the relation
\begin{equation}
\label{res2C}
d'_1 + d'_2 = d - 1 + \sum_{j=1}^{q'} \epsilon_j
\end{equation}
or, equivalently,
\begin{equation}
\label{res2C2}
\sum_{i=1}^{p'}d_i-\sum_{j=1}^{q'}c'_j=d-1,
\end{equation}
which corresponds to the second equality in Theorem \ref{thm1} (1) above. 

A new invariant was recently introduced for a reduced plane curve $C$, namely the type of $C$ defined by
 \begin{equation} 
\label{t1} 
t(C)=d'_1+d'_2-d+1, 
 \end{equation} 
 see \cite{ADP}. When the curve $C$ is not free, then one clearly has
 \begin{equation} 
\label{t11} 
t(C)= \sum_{j=1}^{q'} \epsilon_j>0.
 \end{equation}  
 This invariant has nice properties, for instance a 
 reduced plane curve $C$ is free (resp. plus-one generated) if and only if
 $t(C)=0$ (resp. $t(C)=1$). More, one has $t(C) \geq 0$ for any reduced curve, and the curves with $t(C)=2$ and $t(C)=3$ have been studied in detail in \cite{ADP, Type3}.
 One may try to extend this invariant to surfaces $X$ in $\PP^3$ by setting
 \begin{equation} 
\label{t2}
t(X)=d_1+d_2+d_3+1-d.
 \end{equation} 
 It is clear that if the surface $X$ is free, then $t(X)=0$, but the converse result fails in general, see \cite[Theorem 4.1 (4) and Example 4.8]{HD}.
 Here is a first result on this new invariant.
   \begin{prop} 
\label{propT0}
Let $X:f=0$ be a surface in $\PP^3$ with at most isolated singularities of degree $d$. Then
$$t(X)=d_1+d_2+d_3+1-d \geq d_3 \geq \frac{d-1}{2}.$$
In particular, if $d>1$, such a surface is not free.
  \end{prop}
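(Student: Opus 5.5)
The plan is to combine Theorem \ref{thmD} with the ordering $d_1 \leq d_2 \leq d_3$ of the exponents.

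First, since $X$ has at most isolated singularities, Theorem \ref{thmD} gives $d_1+d_2 \geq d-1$. Hence
$$t(X)=d_1+d_2+d_3+1-d = (d_1+d_2-(d-1))+d_3 \geq d_3 .$$

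For the second inequality, I would use $d_3 \geq d_2$ and $2d_2 \geq d_1+d_2 \geq d-1$. These give $d_3 \geq d_2 \geq (d-1)/2$. Note that $p \geq 3$ in \eqref{res2A}, so $d_3$ is defined.

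For the final claim, observe that if $X$ were free then $q=r=0$. By Theorem \ref{thm1} (1) this forces $p=3$ and $d_1+d_2+d_3=d-1$, i.e. $t(X)=0$, as noted just before the proposition. But $t(X) \geq (d-1)/2 >0$ when $d>1$, which is a contradiction.

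There is no real obstacle: all the content sits in Theorem \ref{thmD}, which we take as given. The only point needing care is that the exponents are ordered, so that $d_1$ and $d_2$ are indeed the two smallest, as Theorem \ref{thmD} requires.
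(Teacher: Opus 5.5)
Your argument is correct and is essentially the paper's proof: Theorem \ref{thmD} gives $t(X)\geq d_3\geq d_2\geq (d_1+d_2)/2\geq (d-1)/2$. Your treatment of the non-freeness claim (freeness forces $q=r=0$, hence $p=3$ and $t(X)=0$ by Theorem \ref{thm1} (1)) only makes explicit what the paper leaves to its earlier remark that free surfaces have $t(X)=0$.
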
 
  \proof 
  Using Theorem \ref{thmD} we have
  $$t(X) \geq (d-1)+d_3+1-d=d_3 \geq d_2 \geq \frac{d_1+d_2}{2} \geq \frac{d-1}{2}.$$
   \endproof
   \begin{cor} 
\label{corT0} 
If the surface $X$ has  at most isolated singularities, then $X$ cannot be a strictly plus-one generated surface.
 \end{cor}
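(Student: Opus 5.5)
The plan is to show that a strictly plus-one generated surface has type $t(X)=1$, and that this value is incompatible with Proposition \ref{propT0} except in a single low-degree case. That remaining case is then ruled out with Theorem \ref{thm1} and Theorem \ref{thmC}.

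I take the definition by analogy with plane curves, and it should be checked against the paper's own definition. A strictly plus-one generated surface is non-free and its resolution \eqref{res2A} has $p=4$, $q=1$, $r=0$. The exponents satisfy $d_1+d_2+d_3=d$, that is $t(X)=1$. Theorem \ref{thm1} (1) then gives $d_1+d_2+d_3+d_4-c_1=d-1$, hence $c_1=d_4+1$.

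Suppose $X$ has at most isolated singularities. Proposition \ref{propT0} gives $1=t(X)\geq d_3\geq (d-1)/2$. Since $d\geq 3$, this forces $d=3$ and $d_3=1$. Then $d_1=d_2=d_3=1$, because Theorem \ref{thmD} gives $d_1+d_2\geq 2$. So everything reduces to the cubic case with ${\bf d}=(1,1,1,d_4)$ and ${\bf c}=(d_4+1)$.

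This leftover case is the only real obstacle, and I would settle it numerically.
\begin{itemize}
\item The isolated-singularity criterion in Theorem \ref{thm1} (2) reads $4+3+d_4^2-(d_4+1)^2=0$. This gives $d_4=3$ and $c_1=4$.
\item The Tjurina formula in Theorem \ref{thm1} (2) then gives $6\tau(X)=8-3-27+64=42$, so $\tau(X)=7$.
\item Theorem \ref{thmC} with $d=3$ and $d_1=1$ gives $\tau(X)\leq 8-1\cdot 1\cdot 2=6$.
\end{itemize}
The last two items contradict each other. Equivalently, the upper bound in Corollary \ref{corC} fails for these Betti numbers.

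Hence no surface with at most isolated singularities is strictly plus-one generated. If the paper defines plus-one generation by $t(X)=1$ alone, without fixing the resolution shape, the case $d\geq 4$ is unchanged. The $d=3$ case would then need the extra observation that three independent linear syzygies force the resolution shape above, or some other argument; this is the point I would double-check.
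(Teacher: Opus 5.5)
Your argument is correct, but it closes the cubic case by a different route than the paper.

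\textbf{The paper's route.} It takes $d_1+d_2+d_3=d$ from \cite[Theorem 4.5]{HD} and applies Theorem \ref{thmD} to get $d=3$ with three exponents equal to $1$. It then invokes an external result of du Plessis and Wall, \cite[Corollary 2.7]{duPCTC08}: a cubic surface with isolated singularities has at most two exponents equal to $1$ (cf. Example \ref{exT0}).

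\textbf{Your route.} You use the shape $p=4$, $q=1$, $r=0$ together with Theorem \ref{thm1} (2). This forces ${\bf d}=(1,1,1,3)$, ${\bf c}=(4)$ and $\tau(X)=7$, which violates the bound $\tau(X)\le 6$ of Theorem \ref{thmC}.

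\textbf{Comparison.} Your version stays entirely within tools already stated in the paper. It is exactly the kind of use of Corollary \ref{corC} that Remark \ref{rk13} advertises. The paper's version is shorter, and its external input is stronger: it excludes three linear syzygies on a cubic with isolated singularities whatever the shape of the resolution. Your computation genuinely needs the plus-one generated shape.

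\textbf{A correction to your first step.} As the paper notes, the three exponents with $d_1+d_2+d_3=d$ need not be the three smallest ones. So a priori you only get $t(X)\le 1$, not $t(X)=1$. This is harmless. Proposition \ref{propT0} gives $t(X)\ge d_3\ge (d-1)/2\ge 1$, so you still obtain $t(X)=1$ and $d=3$. The three smallest exponents are then all equal to $1$, and Theorem \ref{thm1} (1) gives $c_1=d_4+1$ directly.

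\textbf{Your final caveat is unnecessary.} Plus-one generated means that the rank-three module $AR(f)$ has four minimal generators. The condition $p=4$ alone already forces $q=1$ and $r=0$. The module of relations is a second syzygy module of rank one, hence reflexive of rank one over $S$, hence free.
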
  
 \proof
 Using \cite[Theorem 4.5]{HD}, we see that for a strictly plus-one generated surface one has
 $$d_1+d_2+d_3=d$$
 where the exponents $d_1,d_2$ and $d_3$ are not necessarily the minimal ones. However, Theorem \ref{thmD} implies that, if the above equality holds, then $d_1=d_2=d_3=1$ and hence $d=3$.
 On the other hand, we know that such a cubic surface does not exist, see \cite[Corollary 2.7]{duPCTC08}.
 \endproof
  \begin{ex} 
\label{exT0}  
 As explained in \cite[Corollary 5.2.1 and Theorem 5.7]{duPCTC01}, see also the discussion at the end of section 5 \cite{duPCTC01}in a cubic surface $X$ can have at most two exponents equal to 1. This happens if and only if $\tau(X)=6$, for instance for
 the surface
 $$X_1: f_1=xyz+t^3=0$$
 having 3 singularities of type $A_2$, and for the surface
 $$X_2:f_2=tx^2+xz^2+y^3=0$$
 having a singularity of type $E_6$.
 In both cases, the exponents are $$d_1=d_2=1 <d_3=d_4=d_5=2.$$
  \end{ex}
 The next result shows that the case of surfaces with a 1-dimensional singular locus is much more complicated.
  \begin{prop} 
\label{propT}
Let $X:f=0$ be a reduced surface in $\PP^3$ with the minimal resolution \eqref{res2A}. Then
$$t(X)=d_1+d_2+d_3+1-d \geq 0$$
if there are first order Jacobian syzygies $\rho_1, \rho_2$ and $\rho_3$
which are linearly independent over the field of fractions $K$ of the polynomial ring $S$ and
such that $\deg \rho_j=d_j$ for $j=1,2,3$. 
 \end{prop}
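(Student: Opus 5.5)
Let $\rho_j=(a_j,b_j,c_j,e_j)\in S^4_{d_j}$ for $j=1,2,3$ be the three syzygies, so that $a_jf_x+b_jf_y+c_jf_z+e_jf_t=0$. The plan is to build a nonzero vector in $S^4$ orthogonal to all three $\rho_j$ using $3\times 3$ minors, and then compare it with the gradient $\nabla f=(f_x,f_y,f_z,f_t)$, which is also orthogonal to all three. This is the classical Hilbert--Burch / Cramer's rule argument used for plane curves to show $d_1'+d_2'\geq d-1$.

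First, form the $3\times 4$ matrix $N$ whose rows are $\rho_1,\rho_2,\rho_3$. Since the $\rho_j$ are linearly independent over $K$, $N$ has rank $3$ over $K$. Hence the vector $m=(m_1,m_2,m_3,m_4)$ of signed maximal minors ($m_i$ is $(-1)^i$ times the determinant of $N$ with column $i$ deleted) is nonzero. Each $m_i$ is homogeneous of degree $d_1+d_2+d_3$.

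Next, by the Laplace expansion of the $4\times4$ matrix obtained by repeating a row of $N$, we get $N m^T=0$. The kernel of $N$ over $K$ is therefore one-dimensional and spanned by $m$. Since $N\,\nabla f^T=0$ and $\nabla f\neq 0$ (as $f$ is reduced of degree $d\geq 3$), there are nonzero $g,h\in S$ with $g\,\nabla f=h\,m$. We may take $g,h$ coprime.

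Then $g$ divides $h m_i$ for every $i$, hence $g$ divides every $m_i$. Writing $m=g\,m'$ with $m'\in S^4$, we obtain $\nabla f=h\,m'$, so $h$ is a common divisor of the partial derivatives. Comparing degrees gives
$$d_1+d_2+d_3=\deg m=\deg g+\deg m'=\deg g+(d-1)-\deg h.$$

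This identity alone does not give $t(X)\geq 0$; the main obstacle is to show that $h$ is a nonzero constant. Suppose $h$ is nonconstant. Then all of $f_x,f_y,f_z,f_t$ are divisible by $h$, and by the Euler formula $d\cdot f=xf_x+yf_y+zf_z+tf_t$, so $h$ divides $f$. Let $\ell$ be an irreducible factor of $h$. Then $\ell$ divides both $f$ and all partials of $f$. Writing $f=\ell u$, we have $\partial f=\ell\,\partial u+u\,\partial\ell$, so $\ell$ divides $u\,\partial\ell$ for every partial. Since $\ell$ has some partial of degree less than $\deg\ell$, not divisible by $\ell$, it follows that $\ell$ divides $u$. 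Thus $\ell^2$ divides $f$, contradicting reducedness.

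Therefore $h$ is constant, $\deg h=0$, and $d_1+d_2+d_3=\deg g+d-1\geq d-1$, which is $t(X)\geq 0$.
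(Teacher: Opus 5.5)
Your proof is correct, but it follows a different route from the paper's.

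\textbf{The paper's route.} It adjoins the Euler vector $(x,y,z,t)$ as an extra first row, forming the $4\times 4$ matrix $M(E,\rho_1,\rho_2,\rho_3)$. Its determinant is nonzero by $K$-independence. At every smooth point of $X$ the four rows are tangent vectors to the cone over $X$, so the determinant vanishes on $X$. Since $X$ is reduced, the determinant is then divisible by $f$, which gives $1+d_1+d_2+d_3\geq d$ at once.

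\textbf{Your route.} You work instead with the $3\times 3$ minors of the rows $\rho_j$. You use that the $K$-kernel of $N$ is a line containing both $m$ and $\nabla f$. The geometric step (vanishing at smooth points, then divisibility by $f$) is replaced by the algebraic fact that the partials of a reduced $f$ have no common factor.

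\textbf{How they connect.} The two arguments are tied together by Laplace expansion along the Euler row. Your conclusion $m=G\,\nabla f$, with $G\in S$ homogeneous, gives $\det M(E,\rho_1,\rho_2,\rho_3)=\pm E\cdot m=\pm d\,G f$. So the paper's divisibility follows directly from your identity.

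\textbf{What each buys.} Your route gives a sharper statement: the vector of maximal minors is a polynomial multiple of the gradient, with cofactor of degree exactly $t(X)$. This is the direct analogue of $\rho_1\times\rho_2=G\,\nabla f'$ for plane curves. The paper's route is shorter. It avoids the bookkeeping with the coprime pair $g,h$ and the separate proof that $\gcd(f_x,f_y,f_z,f_t)=1$.
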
 
Note that if the surface $X$ is tame with respect to the pair
$(\rho_1, \rho_2)$ as in \cite[Definition 1.2]{HD}, then  the first order Jacobian syzygies $\rho_1, \rho_2$ and $\rho_3$
 are linearly independent over the field $K$ for any new additional syzygy $\rho_3$. However, unlike the curve case, Example \ref{ex6.1} shows that in general one may have $t(X)<0$.
 
 \proof
 Let $M(E,\rho_1,\rho_2,\rho_3)$ be the $4 \times 4$ matrix with the first row $(x,y,z,t)$ and the $j$-th row, for $j=2,3,4$ being given by the components of the  syzygy $\rho_{j-1}$. Then 
 $$g=\det M(E,\rho_1,\rho_2,\rho_3) \ne 0$$
 if and only if  the syzygies $\rho_1, \rho_2$ and $\rho_3$
 are linearly independent over the field $K$. On the other hand, at any
 smooth point $s \in X$, the rows of $M(E,\rho_1,\rho_2,\rho_3)$ evaluated at $s$ are tangent vectors to $X$ at $s$. Since their number is larger than the dimension of this vector space, it follows that $g(s)=0$
 at any smooth point of $X$. Therefore $g$ vanishes on $X$, and hence $g$ is divisible by $f$, the surface $X$ being reduced.
 It follows that
 $$1+d_1+d_2+d_3=\deg g \geq \deg f=d,$$
 which proves our claim.
 \endproof
 
    \begin{cor} 
\label{corT} 
For any reduced surface $X$ of degree $d$, there are three of its exponents, say $d_i,d_j$ and $d_k$ for $i<j<k$, such that
$$d_i+d_j+d_k \geq d-1.$$
 \end{cor}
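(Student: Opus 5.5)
The plan is to reduce Corollary \ref{corT} to Proposition \ref{propT}. That proposition needs three first order Jacobian syzygies which are linearly independent over the fraction field $K$ of $S$. It then gives $1+\deg\rho_1+\deg\rho_2+\deg\rho_3 \geq d$, that is, $\deg\rho_1+\deg\rho_2+\deg\rho_3\geq d-1$. So it suffices to find three \emph{minimal generators} of the syzygy module among those appearing in \eqref{res2A} that are $K$-linearly independent; their degrees are then three exponents $d_i,d_j,d_k$ with $i<j<k$.

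\textbf{Step 1 (rank of the syzygy module).} Let $AR(f)\subset S^4$ be the module of first order Jacobian syzygies, i.e. the kernel of $S^4\to S$, $(a,b,c,e)\mapsto af_x+bf_y+cf_z+ef_t$. Since $X$ is reduced, $f$ is not constant, so $J_f\neq 0$. Tensoring with $K$, the map $K^4\to K$ is surjective, hence $AR(f)\otimes K$ has dimension $3$ over $K$. Alternatively, the Euler syzygies $f_y e_1 - f_x e_2$, etc.\ already produce rank $3$ when some partial derivative is nonzero.

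\textbf{Step 2 (extracting independent generators).} The minimal generators $\rho_1,\dots,\rho_p$ of $AR(f)$, of degrees $d_1\le\cdots\le d_p$, span $AR(f)$ over $S$. Hence their images span $AR(f)\otimes K\cong K^3$. Any spanning set of a $3$-dimensional vector space contains a basis, so there are indices $i<j<k$ with $\rho_i,\rho_j,\rho_k$ linearly independent over $K$.

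\textbf{Step 3 (conclusion).} Apply the determinant argument of Proposition \ref{propT} to $\rho_i,\rho_j,\rho_k$. Its proof never used that these are the three smallest exponents, only that they are $K$-independent syzygies of degrees $d_i,d_j,d_k$. The determinant $g=\det M(E,\rho_i,\rho_j,\rho_k)$ is nonzero, vanishes on $X$, and is therefore divisible by $f$. This gives $1+d_i+d_j+d_k\ge d$, which is the claim.

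The only mildly delicate point is Step 2. We must make sure the generators we pick are genuinely part of the minimal resolution \eqref{res2A}, so that their degrees are among the $d_i$. We also must not accidentally use non-minimal syzygies such as the Koszul ones. Choosing the basis from the minimal generating set itself handles this.
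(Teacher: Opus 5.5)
Your proposal is correct and follows the paper's own proof. The images of the minimal generators of $AR(f)$ span the $3$-dimensional space $AR(f)\otimes K$, so three of them form a $K$-basis, and the determinant argument of Proposition \ref{propT} then gives $d_i+d_j+d_k\ge d-1$. Your observation that Proposition \ref{propT} is stated for the degrees $d_1,d_2,d_3$ but its proof uses only $K$-linear independence makes explicit a point the paper uses tacitly.
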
  
 \proof
 Consider the graded $S$-module of {\it all Jacobian relations} of $f$ or, equivalently, the module of derivations killing $f$, namely
\begin{equation}
\label{eqD0}
AR(f)= \{\rho=(a^x,a^y, a^z, a^t) \in S^{4} \ : \  a^xf_x+a^yf_y+a^zf_z+a^tf_t=0\}.
\end{equation}
We identify the syzygy $\rho$ above to the derivation
$$\theta=a^x\partial_x+a^y\partial_y+a^z\partial_z+a^t\partial_t$$
such that we have $\theta(f)=0$.
 Note that the vectors $\rho_u \otimes 1$  coming from the minimal set of generators $\rho_u$ for the $S$-module $AR(f)$ span the $K$-vector space $AR(f) \otimes K$. Since this vector space has dimension 3, it follows that there exit $\rho_i,\rho_j$ and $\rho_k$ for $1 \leq i<j<k\leq p$ that yield a basis of $AR(f) \otimes K$. It remains to apply Proposition \ref{propT}.
 \endproof
 
To see how the equality \eqref{t11} extends to the surface case, one may proceed as follows.
If we set $\al_j=c_j-d_{j+3}$ for $j=1,..., p-3$ and $\be_k=b_k-c_{p-3+k}$ for $k=1,\ldots ,r$, it follows from Theorem \ref{thm1} (1) that one has
$$t(X)= \sum_{j=1}^{p-3}\al_j-\sum_{k=1}^r\be_k.$$
\begin{rk}
\label{rk1}
It is not true that for any reduced surface $X$ one has $\al_j \geq 1$ for $j=1,\ldots, p-3$ and $\be_k \geq 1$ for $k=1,\ldots ,r$, see Examples
\ref{ex4} and \ref{ex5}. This is surprising if compared with \eqref{res2B}.
 \end{rk}
 
 \begin{rk}
\label{rk2}
A reduced plane curve is said to be {\it Tjurina maximal} if $2d_1' \geq d$ and 
$$\tau(C)= (d-1)^2-d'_1(d-d'_1-1)-{2d'_1+2-d \choose 2}.$$
It is known that for such a curve one has $\epsilon_j=1$ for all $j=1,\ldots, q'$ and $d_1'=\ldots =d_{p'}$, see \cite[Theorem 3.1]{maxTjurina}.
It is interesting to note that the Cayley and the Kummer surfaces considered in Examples \ref{ex10} and \ref{ex20} enjoy similar properties for their graded Betti numbers, that is the $d_i$'s are all equal, and
$\al_j=\be_k=1$ for $j=1, \ldots, d-3$ and $k=1,\ldots ,r$.
It would be nice to have a theoretical explanation for this fact.
\end{rk}
The nodal surface of degree $d=8$ in Example \ref{ex30} has two values for the degrees
$d_i$'s, namely one has in this case ${\bf d}=(7_6,9_9)$.
A partial explanation of these two values is given by the following result.
Before stating it, we need some notation. 

Inside the $S$-module $AR(f)$, there is the graded $S$-submodule of {\it Koszul type relations} $KR(f)$ generated by the $N$ Koszul type derivations $\theta_{i,j}$ of degree $d-1$, where
$$N=\binom{4}{2}=6$$
and $\{i,j\}$ is an ordered subset of $\{x,y,z,t\}$ having 2 elements and
\begin{equation}
\label{eqK0}
\theta_{i,j}=f_i \partial_j-f_j\partial_i.
\end{equation}
Let $ER(f)=AR(f)/KR(f)$ be the quotient module and let
$$mdr_0(f)=\min \{k \ : \ ER(f)_k \ne 0\}.$$
  \begin{prop} 
\label{propN}
Let $X:f=0$ be a nodal surface in $\PP^3$ of degree $d \geq 5$.
Then
$$mdr_0(f) \geq 2d-  \floor[\Big]{ \frac{d}{2} } -3 > d-1.$$
In particular, the graded Betti numbers $\bf d$ of the surface $X$ satisfy
$$d_i=d-1 \text{ for } i=1, \ldots, N$$
and
$$d_i \geq 2d-   \floor[\Big]{ \frac{d}{2} } -3$$
for $i>N$.
\end{prop}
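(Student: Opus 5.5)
The plan is to reduce the claim to a vanishing statement for Koszul cohomology of the partial derivatives, and to obtain that vanishing from Hodge theory of the complement $U=\PP^3\setminus X$. Let $K^*_f$ be the Koszul complex of $f_x,f_y,f_z,f_t$ in $S$. With the grading conventions of \eqref{res2A}, the module $AR(f)$ is (up to a shift) the kernel of the last map of $K^*_f$, and $KR(f)$ is exactly the image of the previous map. So
$$ER(f)_k \cong H^3(K^*_f)_{k+3(d-1)+c}$$
for a fixed normalizing shift $c$, which I will pin down. Thus $mdr_0(f)$ is the first degree in which $H^3(K^*_f)$ is nonzero.

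First step. I will relate $H^3(K^*_f)$ to the $E_2$-term of the pole order spectral sequence computing $H^*(U,\C)$, following Dimca and Dimca--Saito. For a surface with isolated singularities, the $E_2$-terms in the relevant row are identified, degree by degree, with Koszul cohomology. Moreover, $E_2=E_\infty$ in a range of degrees controlled by the singularities.

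Second step, which is the key input and the expected main obstacle. For nodal hypersurfaces, the pole order filtration coincides with the Hodge filtration on $H^3(U)$, since nodes have spectrum numbers that force this equality (Dimca--Saito). Hence the graded pieces of $E_\infty$ can be read off from the mixed Hodge structure on $H^3(U)$. For a nodal surface, the only non-pure contribution to $H^3(U)$ comes from the defect $\delta$, i.e. the failure of the nodes to impose independent conditions on surfaces of degree $2d-4$, and this part has a fixed Hodge type. The resulting vanishing is of the form $H^3(K^*_f)_j=0$ for all $j$ below a threshold that is linear in $d$ with a $\lfloor d/2\rfloor$ correction. Translating that threshold back through the shift of the first paragraph should give
$$mdr_0(f)\geq 2d-\floor[\Big]{\frac{d}{2}}-3.$$
I expect the hardest part to be the bookkeeping: getting the exact degree shift and the range where $E_2=E_\infty$, and checking that the Hodge-type constraint yields precisely the floor term. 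To do this I would first look for the statement already proved in the literature on syzygies of nodal hypersurfaces and quote it, rather than re-derive the spectral sequence degeneration. The strict inequality $2d-\lfloor d/2\rfloor-3>d-1$ is the elementary estimate $d-2>\lfloor d/2\rfloor$, valid for $d\geq 5$.

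Third step, deducing the Betti numbers. Since $KR(f)$ is generated in degree $d-1$, we have $KR(f)_k=0$ for $k<d-1$. Since $mdr_0(f)>d-1$, we also have $ER(f)_k=0$ for $k\leq d-1$. It follows that $AR(f)_k=0$ for $k<d-1$ and $AR(f)_{d-1}=KR(f)_{d-1}$.

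The six Koszul derivations $\theta_{i,j}$ are linearly independent over $\C$. Indeed, a linear dependence would give a linear relation among $f_x,f_y,f_z,f_t$, so $X$ would be a cone over a plane curve, whose singular locus is 1-dimensional and hence not nodal. Being of the lowest possible degree $d-1$, the $\theta_{i,j}$ are minimal generators, so $d_1=\dots=d_6=d-1$.

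Any further minimal generator of $AR(f)$ has nonzero image in $ER(f)$, because $KR(f)$ is generated by the $\theta_{i,j}$. It therefore has degree at least $mdr_0(f)$, which gives $d_i\geq 2d-\lfloor d/2\rfloor-3$ for $i>N=6$.
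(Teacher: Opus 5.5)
Your proposal follows the paper's proof: the paper also does not re-derive the bound $mdr_0(f)\geq 2d-\lfloor d/2\rfloor-3$ but quotes it from the Hodge-theoretic results on nodal hypersurfaces in \cite[Equation (4.5) and Corollary 4.3]{Ferrara}, and your third step is the paper's deduction written out in more detail (no syzygies below degree $d-1$, $AR(f)_{d-1}=KR(f)_{d-1}$ spanned by the six independent $\theta_{i,j}$, and every other minimal generator nonzero in $ER(f)$). Two small slips, neither of which affects the argument: a cone over a \emph{smooth} plane curve has only its vertex as singular point, so it is excluded because that point has multiplicity $d\geq 5$, not because the singular locus is $1$-dimensional; and your heuristic about a non-pure defect part of $H^3(U)$ does not apply to surfaces (their nodes are $\Q$-homology manifold points, so $H^3(U)$ is pure), though nothing depends on it since you quote the bound.
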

\proof
The inequality
$$mdr(f) \geq 2d-  \floor[\Big]{ \frac{d}{2} } -3 $$
follows from \cite[Equation (4.5) and Corollary 4.3]{Ferrara}.
The inequality 
$$2d-  \floor[\Big]{ \frac{d}{2} } -3 \geq d-1$$
clearly holds for any $d \geq 5$.
Since the $N$ Koszul type derivations $\theta_{ij}$ are linearly independent in $AR(f)_{d-1}$, we need to have exactly $N$ elements of degree $d-1$ in any minimal set of generators for $AR(f)$. The other generators we add yield nonzero elements in $ER(f)$, and hence their degree is at least $mdr(f)$.
\endproof

\begin{rk}
\label{rk3}
(i) Note that in Example \ref{ex30} we have a nodal surface of degree $d=8$ and the corresponding module $AR(f)$ has $N=6$ generators of degree $d-1=7$ and the remaining generators have degree 
$d_i=9$ for $i>6$. Since in this case
$$2d-  \floor[\Big]{ \frac{d}{2} } -3=9,$$
we see that our Proposition \ref{propN} is sharp.

(ii) A similar result to Proposition \ref{propN} may be stated in the more general situation when the surface $X$ has only isolated weighted homogeneous singularities, by using \cite[Theorem 9]{DSa}, which extends
the results in \cite{Ferrara} to this setting.
\end{rk}
For a curve $C:f'=0$ of degree $d$ as above which is not free it is known that 
 $$d_1' \leq d_2' \leq d_3' \leq d-1,$$
 see for instance \cite[Theorem 2.4]{3syz}.   This is the motivation for Conjecture \ref{C1} stated in the Introduction.
  
 \begin{rk}
\label{rkC1}
Conjecture \ref{C1} holds for the following three types of surfaces.

\medskip

\noindent (i) Let $C:f'(x,y,z)=0$ be a reduced curve of degree $d'>1$  which is not free such that $d_1'>0$ and consider the surface
$$X:f(x,y,z,t)=tf'(x,y,z)=0$$
of degree $d=d'+1>2$. If  $1 \leq d'_1\leq ...\leq d'_{p'}$ are the exponents of $C$, then it follows from \cite[Theorem 1.6]{DPZPPA} that the exponents of $X$ are
$$d_1=1 \text{ and } d_k=d'_{k-1}$$
for $2 \leq k \leq p=p'+1$.  Since
$$d_1' \leq d_2' \leq d_3' \leq d'-1,$$
it follows that
$$d_1 \leq d_2 \leq d_3 \leq d_4 \leq d'-1=d-2.$$
%\medskip

\noindent (ii) Let $C:f'(x,y,z)=0$ be a reduced curve of degree $d>1$  such that $d_1'>0$ and consider the surface
$$X:f(x,y,z,t)=f'(x,y,z)+t^d=0$$
of degree $d$. If  $1 \leq d'_1\leq ...\leq d'_{p'}$ are the exponents of $C$, it is easy to see that the exponents of $X$ are
$$d_k=d_k' \text{ for } k=1, \ldots ,p'$$
and $d_k=d-1$ for $k=p'+1, \ldots, p'+3$, the new generators to be added being $\theta_{x,t}$, $\theta_{y,t}$ and $\theta_{z,t}$ with the notation from \eqref{eqK0}.
It follows that Conjecture \ref{C1} holds in this case as well.

\medskip

\noindent (iii) Conjecture \ref{C1} holds for any nodal surface $X$ as in Proposition \ref{propN}.

 \end{rk}
 
 We can prove the following weaker form of Conjecture \ref{C1}.

  \begin{prop} 
\label{propC1}
 For any reduced surface $X:f=0$ in $\PP^3$ of degree $d$  one has
 $$d_1 \leq d_2 \leq d_3  \leq d-1.$$
 \end{prop}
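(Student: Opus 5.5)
The plan is to use the six Koszul type derivations $\theta_{i,j}$ of degree $d-1$ from \eqref{eqK0}. Any minimal set of generators of $AR(f)$ contains elements of degrees $d_1\le d_2\le \ldots \le d_p$. The claim $d_3\le d-1$ then amounts to showing that the span of the generators of degree $\le d-1$ has rank at least $3$ in $AR(f)\otimes K$, or at least that $\dim_\C$ of the degree $\le d-1$ part needs three minimal generators. The key observation is that $KR(f)_{d-1}$ is contained in the submodule generated by the minimal generators of degree $\le d-1$. Indeed, every homogeneous element of degree $d-1$ of $AR(f)$ is an $S$-combination of minimal generators of degree $\le d-1$.

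The main step is to show that the submodule $KR(f)\otimes K$ has rank $3$ over $K$. Since $f$ is reduced, the partials are not all zero; say $f_t\neq 0$. Then $\theta_{x,t},\theta_{y,t},\theta_{z,t}$ have components
$$\theta_{x,t}=f_x\partial_t-f_t\partial_x,\quad \theta_{y,t}=f_y\partial_t-f_t\partial_y,\quad \theta_{z,t}=f_z\partial_t-f_t\partial_z.$$
The $3\times 3$ minor in the $\partial_x,\partial_y,\partial_z$ coordinates is $-f_t^3\neq 0$, so they are linearly independent over $K$. Since $AR(f)\otimes K$ has dimension $3$, as noted in the proof of Corollary \ref{corT}, we get a $K$-basis consisting of elements of degree $d-1$.

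Next I would conclude as follows. Let $\rho_1,\ldots,\rho_m$ be the minimal generators of degree $\le d-1$. These three Koszul derivations lie in the $S$-span of $\rho_1,\ldots,\rho_m$. If $m\le 2$, this $S$-span would have rank at most $2$ over $K$, which contradicts the independence of the three derivations. Hence $m\geq 3$, i.e.\ $d_3\le d-1$. The inequalities $d_1\le d_2\le d_3$ are just the chosen ordering.

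The step I expect to need the most care is making sure that ``every degree $d-1$ element lies in the span of generators of degree $\le d-1$'' is used correctly for the minimal graded generating set. This is standard for homogeneous generators, since higher-degree generators cannot contribute in degree $d-1$. A side remark is that if $X$ is a cone, some partial may vanish. This does no harm, because we only need a single nonzero partial, which exists since $d\ge 3$ and $f$ is reduced.
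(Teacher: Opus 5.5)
Your proof is correct and follows the paper's argument: the Koszul derivations $\theta_{x,t},\theta_{y,t},\theta_{z,t}$ of degree $d-1$ are $K$-linearly independent, so they cannot all lie in the $S$-span of at most two generators of degree $\le d-1$, which forces $d_3\le d-1$. You also make explicit two points the paper leaves implicit: choosing a variable whose partial derivative is nonzero, and the minor $-f_t^3$ that certifies the independence.
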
  
 \proof
 Note that the Koszul derivations $\theta_{x,t}$, $\theta_{y,t}$ and $\theta_{z,t}$ with the notation from \eqref{eqK0} are linearly independent over the field of fractions $K$. Hence they form a basis of the vector space $AR(f) \otimes K$, if we identify $\theta \in AR(f)$ and $\theta \otimes 1 \in AR(f) \otimes K$. If $d_3 >d-1$, then it follows that 
the Koszul derivations $\theta_{x,t}$, $\theta_{y,t}$ and $\theta_{z,t}$ are linear combinations  with coefficients in $S$ of the generators $\rho_1, \rho_2 $ of $ AR(f)$,
with $\deg \rho _i=d_i$ for $i=1,2$. It follows that the 3-dimensional vector space 
$AR(f) \otimes K$ is generated by $\rho_1$ and $\rho_2$, a contradiction. 
 \endproof
 
  \begin{rk}
\label{rkC10}
Note that Theorem \ref{thmC} and Theorem \ref{thmD} hold with exactly the same statement when we replace the surface $X$ in $\PP^3$ by a hypersurface $X$ in $\PP^n$ for any $n \geq 3$, see \cite[Theorem 5.3]{duPCTC01} and respectively \cite[Lemma 5.2]{duPCTC01}.
Similarly, Proposition \ref{propC1} holds for any hypersurface $X$ in $\PP^n$ for any $n\geq 3$, the conclusion being replaced by
$$d_1 \leq d_2 \leq \ldots \leq d_n \leq d-1.$$
The proof of this claim is exactly as the proof of Proposition \ref{propC1}.
Indeed, if $(x_0, \ldots,x_n)$ are the coordinates on $\PP^n$, then the
Koszul derivatives 
$$\theta_{i,n}=f_i \partial_n-f_n\partial_i \text{ for } i=0, \ldots,n-1$$
are linearly independent over the corresponding fraction field $K$.
  \end{rk}

 \section{ Some examples}

\begin{ex}
\label{ex2}
We start with one example of a surface with isolated singularities, where the invariants  $\al_i$ and $\beta_j$ behave in a similar way to the curve case.
Consider the cubic surface
$$X: f=txz + y^2z+x^3-z^3=0.$$
A direct computation  shows that
the corresponding minimal resolution for $M(f)$ is given by
$$0 \to S(-6) \to  S(-5)^5 \to S(-3) \oplus  S(-4)^6 \to  S(-2)^4 \to S.$$
It follows that $p=7$, $q=5$ and $r=1$, and the graded Betti numbers of this surface are
$$ {\bf d}=(1,2_6),  \ {\bf c}=(3_5) \text{ and } {\bf b}=(4).$$
The surface has  $\tau(X)=5$.
All the claims in Theorem \ref{thm1} hold and  one has
$$\al_1=c_1-d_4=3-2=1, \  \al_2=c_2-d_5=3-2=1, \  \al_3=c_3-d_6=3-2=1, $$
$$ \al_4=c_4-d_7=3-2=1, \be_1=b_1-c_5=4-3=1.$$
\end{ex}

\begin{ex}
\label{ex3}
Consider the sextic surface
$$X: f=x^5z+y^6+x^4yt+xy^5=0.$$
A direct computation  shows that
the corresponding minimal resolution for $M(f)$ is given by
$$0  \to S(-9) \to S(-6) \oplus S(-7) \oplus  S(-8)^2 \to  S(-5)^4 \to S.$$
It follows that $p=4$, $q=1$ and $r=0$, and the graded Betti numbers of this surface are
$$ {\bf d}=(1,2,3_2) \text{ and } {\bf c}=(4).$$
The surface $X$ is nearly-free and has a 1-dimensional singular locus, with the Hilbert polynomial
$$P(M(f))(u)=16u-27.$$
All the claims in Theorem \ref{thm1} hold and one has
$\al_1=c_1-d_4=4-3=1. $
\end{ex}

\begin{ex}
\label{ex4}
Consider the surface of degree $d=9$ given by
$$X: f=(x^3-yzt)^3+(t^3-xyz)^3=0.$$
Then $X$  is a union of 3 cubic surfaces in $\PP^3$ and
a direct computation  shows that
the corresponding minimal resolution for $M(f)$ is given by
$$0 \to S(-18)\oplus S(-19)  \to S(-13)\oplus S(-16)\oplus S(-17)^3 \oplus S(-18)^2 \to $$
$$\to S(-9)  \oplus  S(-12)^2 \oplus S(-15)^2 \oplus S(-16)^3 \to  S(-8)^4 \to S.$$
It follows that $p=8$, $q=7$ and $r=2$, and the graded Betti numbers of this surface are
$$ {\bf d}=(1,4_2,7_2,8_3), \  {\bf c}=(5,8,9_3,10_2)  \text{ and } {\bf b}=(10,11).$$
All the claims in Theorem \ref{thm1} hold, in particular one has
$$P(M(f))(u)=38u-119.$$
With the notation from Remark \ref{rk1} one has
$$\be_1=b_1-c_6=10-10=0. $$
This example is revisited below in Example \ref{ex6.1}.
\end{ex}

\begin{ex}
\label{ex5}
Finally one example where the invariants $\al_i$ and $\beta_j$ have negative values.
Consider the surface of degree $d=16$ given by
$$X: f=(x^4-yzt^2)^4+(t^4-xyz^2)^4=0.$$
Then $X$  is a union of 4 quartic surfaces in $\PP^3$ and
a direct computation  shows that
the corresponding minimal resolution for $M(f)$ is given by
$$0 \to S(-29)\oplus S(-30)\oplus S(-32)  \to S(-22)^2\oplus S(-23)\oplus S(-28)^4 \oplus S(-29)^3 \oplus S(-31)^2 \to $$
$$\to S(-20)^2  \oplus  S(-21)^3 \oplus S(-27)^4 \oplus S(-28)^2 \oplus S(-30) \to  S(-15)^4 \to S.$$
It follows that $p=12$, $q=12$ and $r=3$, and the graded Betti numbers of this surface are
$$ {\bf d}=(5_2,6_3,12_4,13_2,15), \  {\bf c}=(7_2,8,13_4,14_3,16_2)  \text{ and } {\bf b}=(14,15,17).$$
All the claims in Theorem \ref{thm1} hold, in particular one has
$$P(M(f))(u)=147u-1382.$$
With the notation from Remark \ref{rk1} one has
$$\al_9=c_9-d_{12}=14-15=-1,$$
$$\be_1=b_1-c_{10}=14-14=0,$$
and
$$\be_2=b_2-c_{11}=15-16=-1. $$
\end{ex}

 \section{ On the Jacobian syzygies of surfaces coming from pencils of surfaces in $\PP^3$}
 
 Let $g,h \in S$ be two homogeneous polynomials of degree $k\geq 2$ and consider the pencil of surfaces in $\PP^3$ given by
 $$\PPP : \al g +\be h=0,$$
where $\al, \be \in \C$.  Consider a surface
 $$X:f=q_1 \ldots q_m=0,$$
 for some integer $m \geq 2$, where $q_j=\al _jg +\be_j h$ are reduced members of the pencil $\PPP$
 considered above. Then $X$ is a reduced surface of degree $d=km$, which has  {\it four natural Jacobian syzygies of degree $2k-2$ }that we describe now. Recall that a syzygy
 \begin{equation}
\label{eqS1}
\rho=(a^x,a^y,a^z,a^t) \text{ corresponding to } a^xf_x+a^yf_y+a^zf_z+a^tf_t=0,
 \end{equation} 
with $a^x,a^y,a^z,a^t \in S$ may be identified to a differential 3-form
 \begin{equation}
\label{eqS2}
\omega(\rho)= \ a^x \dd y\wedge \dd z  \wedge \dd t-a^y  \dd x\wedge \dd z  \wedge \dd t+a^z \dd x\wedge \dd y  \wedge \dd t-a^t \dd x\wedge \dd y  \wedge \dd z,
 \end{equation}  
satisfying
 \begin{equation}
\label{eqS3}
\dd f \wedge \omega(\rho)= 0
 \end{equation} 
 with $\dd f=f_x \dd x+f_y \dd y +f_z \dd z +f_t \dd t$ the differential of $f$.
 The differential 2-form
 \begin{equation}
\label{eqS4}
\omega(\PPP)=\dd g \wedge \dd h
 \end{equation} 
 clearly satisfies
  \begin{equation}
\label{eqS5}
\dd f \wedge \omega(\PPP)=0,
 \end{equation} 
 a key fact that was used with a similar purpose in the plane curve case in \cite{Mich}. It follows that the following four 3-forms 
  \begin{equation}
\label{eqS6}
\omega^x=\dd x \wedge \omega(\PPP), \  \omega^y=\dd y \wedge \omega(\PPP), \ \omega^z=\dd z \wedge \omega(\PPP) \text{ and } \omega^t=\dd t \wedge \omega(\PPP)
 \end{equation}  
 yield four Jacobian syzygies for the surface $X$. We have the following result, showing that these syzygies are in general linearly independent.
  \begin{thm} 
\label{thmPPP} 
 Assume that the base locus
 $$B=\{g=h=0\}$$
 of the pencil $\PPP$ with $k=\deg g=\deg h \geq 2$ is a reduced curve. If a linear dependence relation
 $$c^x\omega^x+c^y\omega^y+c^z\omega^z+c^t\omega^t=0$$
 with $c^x,c^y,c^z,c^t \in \C$ holds, then the curve $B$ is contained in the plane
 $$H: \ell=c^x x+c^yy+c^zz+c^tt=0.$$
 In particular, the differential forms $\omega^x, \omega^y, \omega^z$ and $\omega^t$ span a vector subspace of dimension  $3$ or $4$.
 \end{thm}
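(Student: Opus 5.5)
The plan is to rewrite the dependence relation as a single wedge product and then evaluate it pointwise on $B$. By linearity, $\sum c^v\omega^v = \dd\ell\wedge\dd g\wedge\dd h$ with $\ell=c^x x+c^yy+c^zz+c^tt$, since $\dd\ell=c^x\dd x+c^y\dd y+c^z\dd z+c^t\dd t$. So the relation says $\dd\ell\wedge\dd g\wedge\dd h=0$ identically as a polynomial 3-form on $\C^4$. If all $c$'s vanish there is nothing to prove; otherwise $\ell\neq 0$ and $H$ is a genuine plane.

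Work on the affine cone $\hat B=\{g=h=0\}\subset\C^4$, a surface. Since $B$ is reduced, the complete intersection $(g,h)$ is radical, and at a generic point $p$ of each irreducible component of $\hat B$ the differentials $\dd g(p),\dd h(p)$ are linearly independent (generic smoothness of a reduced complete intersection). At such $p$, the vanishing of $\dd\ell\wedge\dd g\wedge\dd h$ at $p$ forces $\dd\ell=\lambda\,\dd g(p)+\mu\,\dd h(p)$ for some scalars $\lambda,\mu$. Evaluating on the Euler vector $E_p=p$ and using Euler's formula, $\ell(p)=\dd\ell(E_p)=\lambda k g(p)+\mu k h(p)=0$. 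Hence $\ell$ vanishes on a dense open subset of every component of $\hat B$, so on all of $\hat B$, and therefore $B\subset H$.

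For the final claim, note that a reduced complete intersection of two surfaces of degree $k$ lies in at most one plane unless it is degenerate in some other way. Suppose $B$ lay in two distinct planes $H_1,H_2$. Then $B$ would be contained in the line $H_1\cap H_2$, but $B$ has degree $k^2\ge 4$ and is reduced of pure dimension one, which is a contradiction. So the set of $(c^x,c^y,c^z,c^t)$ giving a dependence relation is contained in the at most one-dimensional space of linear forms vanishing on $B$. The kernel of $\C^4\to\Lambda^3$, $c\mapsto\sum c^v\omega^v$, therefore has dimension at most $1$, and the span has dimension $3$ or $4$.

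The main obstacle is the smoothness step: justifying that the reduced hypothesis on $B$ gives independence of $\dd g,\dd h$ at generic points of each component of the cone. I would argue it via the Jacobian criterion, since the scheme $g=h=0$ is reduced, hence generically smooth, and being a complete intersection of codimension 2 its smooth points are exactly where $\dd g\wedge \dd h\neq0$.
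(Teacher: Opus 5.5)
Your proposal is correct and follows essentially the same route as the paper: rewrite the relation as $\dd\ell\wedge\dd g\wedge\dd h=0$, use the linear independence of $\dd g,\dd h$ at smooth points of each component of $B$ to put $\dd\ell$ in their span, get $\ell=0$ there by Euler's formula, and then rule out two independent relations because a reduced curve of degree $k^2>1$ cannot lie in a line. Your version is slightly more careful than the paper's: you work at generic points of the cone, which keeps you off the other components, and you justify the independence of the differentials by the Jacobian criterion for the reduced complete intersection; otherwise the argument is the same.
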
 
 \proof
First note that the linear dependence relation is equivalent to the equality
  \begin{equation}
\label{eqS7}
\dd \ell \wedge \omega(\PPP)=0,
 \end{equation} 
with $ \dd \ell$ the differential of $\ell$. Let $B_1$ be an irreducible component of the base locus $B$ and let $s$ be a smooth point on $B_1$. It follows that the differentials $\dd_s g$ and $\dd_sh$ are linearly independent linear forms. The condition \eqref{eqS7} is equivalent to saying that $\ell$, identified to $\dd_s \ell$, is a linear combination of $\dd_s g$ and $\dd_sh$. Since $\dd_s g(s)=\dd_sh(s)=0$, it follows that $\ell(s)=0$. Since $B_1$ is a reduced curve, the smooth points are dense in $B_1$, hence $B_1$ is contained in the hyperplane $H$. This proof applies to any irreducible component of $B$, hence $B \subset H$. 

If $k>1$ then $B$ is a complete intersection of degree $k^2 >1$, hence $B$ cannot be a line. But a reduced curve contained in a line is a line. It follows that there cannot be two independent linear relations involving the 4 differential forms, since otherwise
$B \subset H_1 \cap H_2$, where $H_1$ and $H_2$ are two distinct planes in $\PP^3$. 
\endproof 

 The following example shows that the four Jacobian syzygies constructed in Theorem \ref{thmPPP} may not be {\it primitive}, that is they can be the multiples of  lower degree Jacobian syzygies.
 \begin{ex}
\label{ex6.1}
Consider the surface of degree $d=3m$ given by
$$X: f=(x^3-yzt)^m+(t^3-xyz)^m=0,$$
for any $m \geq 2$, that extends Example \ref{ex4} above.
Then $X$  is a union of $m\geq 2$ cubic surfaces in $\PP^3$ that are reduced members of a pencil $\PPP$ as above with 
$$g=x^3-yzt \text{ and } h=t^3-xyz.$$
A direct computation shows that
$$
\omega(\PPP)=\dd g \wedge \dd h=-z(3x^3+yzt)\dd x \wedge \dd y-y(3x^3+yzt)\dd x \wedge \dd z-$$
$$-(y^2z^2-9x^2t^2)\dd x \wedge \dd t
-
z(3t^3+xyz)\dd y \wedge \dd t-y(3t^3+xyz)\dd t \wedge \dd t.$$
It follows that
$$\omega^x=\dd x \wedge \omega(\PPP)=-(3t^3+xyz)(z\dd x \wedge \dd y \wedge \dd t+y\dd x \wedge \dd z \wedge \dd t).$$
Using the formulas \eqref{eqS1} and \eqref{eqS2}, we see that the Jacobian syzygy $\rho^x$ corresponding to the differential form $\omega^x$ is
$$\rho^x=(3t^3+xyz)\rho_1$$
where 
$$\rho_1=(0,y,-z,0)$$
is the lowest degree syzygy of $f$. A similar computation shows that
the Jacobian syzygy $\rho^t$ corresponding to the differential form $\omega^t$ is
$$\rho^x=(3x^3+yzt)\rho_1.$$ 
Moreover, the Jacobian syzygies $\rho^y$ and $\rho^z$ corresponding to the differential form $\omega^y$ and respectively
$\omega^z$ are given by the following formulas
$$\rho^y=(-y(3t^3+xyz),0,y^2z^2-9x^2t^2,-y(3x^3+yzt))$$
and
$$\rho^z=(z(3t^3+xyz),9x^2t^2-y^2z^2,0,z(3x^3+yzt)).$$
We see that the syzygies $\omega^y$ and 
$\omega^z$ are primitive, since in each case the four coordinate polynomials have no common factor. Moreover, the syzygies of the form
$$\rho=c^y \rho^y +c^z \rho^z + b \rho_1$$
with $c^y,c^z \in \C$ not both zero, and $b \in S_3$ are all primitive.
Indeed, looking at the first and the fourth coordinate polynomials of $\rho$, we see that the only possible common factor is a linear form in $y,z$, but such a factor cannot divide the second and the third coordinates of $\rho$, due to the monomial $x^2t^2$ present there.
It follows that one can take $\rho_1$, $\rho_2=\rho^y$ and $\rho_3=\rho^z$ as the first 3 generating syzygies for $AR(f)$ and hence $d_1=1$ and $d_2=d_3=4$ for any $m \geq 2$.
This shows that
$$t(X)=1+4+4+1-3m=10-3m$$
can be an arbitrarily large negative integer.
In this example the vector space spanned by the forms $\omega^x, \omega^y, \omega^z$ and $\omega^t$ has dimension 4, as predicted by Theorem \ref{thmPPP}. On the other hand, the obvious relation
$$(9x^2t^2-y^2z^2)\rho_1+z\rho_2+y\rho_3=0$$
explain why Proposition \ref{propT} cannot be applied to these 3 syzygies and get $t(X)\geq 0$.

\end{ex}

%\section*{Conflict of Interests}
%We declare that there is no conflict of interest regarding the publication of this paper.

%\section*{Data Availability Statement}
%We do not analyze or generate any datasets, because this work proceeds within a theoretical and mathematical approach.


\begin{thebibliography}{00}

%\bibitem{Abe}  T. Abe, Plus-one generated and next to free arrangements of hyperplanes. \textit{Int. Math. Res. Not.} \textbf{2021(12)}: 9233 -- 9261 (2021).



\bibitem{CoCoA}
    {J. Abbott, A. M.  Bigatti  and L. Robbiano},
    {CoCoA  4.7.4 }: a system for doing {C}omputations in {C}ommutative {Algebra}. Available at {http://cocoa.dima.unige.it}
    
 \bibitem{ADP}  T. Abe, A. Dimca, P. Pokora, A new hierarchy for complex plane curves,
Canadian Mathematical Bulletin. Published online 2025:1-24. doi:10.4153/S0008439525101422   
    
    
\bibitem{AGV} V. I. Arnold, S. M.  Gusein-Zade, A. N. Varchenko,  Singularities of Differentiable Maps. vols 1/2, Monographs in Math., \textbf{ 82/83}, Birkh\"auser, Basel (1985/1988)    

\bibitem{Hess}  L. Bus\'e, A. Dimca, H. Schenck and G. Sticlaru, The Hessian polynomial and the Jacobian ideal of a reduced hypersurface in $\PP^n$, Advances in Mathematics,
Volume 392 (2021), 108035.
    
\bibitem{Chm}  S. V. Chmutov, Examples of projective surfaces with many singularities, J. Algebr. Geom. 1, 191--196 (1992).

%\bibitem{CD} A.D.R. Choudary, A. Dimca,  Koszul complexes and hypersurface singularities, Proc. Amer. Math. Soc. 121 (1994), 1009--1016.

\bibitem{Singular} { W. Decker, G.-M. Greuel, G. Pfister \and H. Sch{\"o}nemann.} \newblock {\sc Singular} {4-0-2} --- {A} computer algebra system for  polynomial computations. Available at {http://www.singular.uni-kl.de}.

\bibitem{DmaxS} A. Dimca, Freeness versus maximal degree of the singular subscheme for surfaces in $P^3$, 
Geom. Dedicata 183(2016), 101--112.

\bibitem{Dmax}  A. Dimca, Freeness versus maximal global Tjurina number for plane curves. \textit{Math. Proc. Cambridge Phil. Soc.}  \textit{163}:  161 -- 172 (2017).

\bibitem{Mich}  A. Dimca, Curve arrangements, pencils, and Jacobian syzygies,  Michigan Math. J. 66 (2017), 347--365.

\bibitem{Ferrara}  A. Dimca, 
On the syzygies and Hodge theory of nodal hypersurfaces, Ann. Univ. Ferrara Sez. VII Sci. Mat. 63 (2017), 87--101.

%\bibitem{CMreg} A. Dimca, On the Castelnuovo-Mumford regularity of curve arrangements.  \textit{Rev. Roumaine Math. Pures Appl}. 70 (2025),  73--84.

\bibitem{DPZPPA}  A. Dimca, P. Pokora,  On the Jacobian algebras of Ziegler pairs of plane arrangements, arXiv:2604.25637.

\bibitem{DSa} A. Dimca, M. Saito,
Generalization of theorems of Griffiths
and Steenbrink to hypersurfaces
with ordinary double points, Bull. Math. Soc. Sci. Math. Roumanie, 60(108) (2017), 351--371.

\bibitem{DStChm} A. Dimca, G. Sticlaru, On the syzygies and Alexander polynomials of nodal hypersurfaces,  Math. Nachrichten 285 (2012), 2120--2128.

\bibitem{DStExpo} A. Dimca, G. Sticlaru, On the exponents of free and nearly free projective plane curves, Rev. Mat. Complut. 30(2017), 259--268.

%\bibitem{DStRIMS} A. Dimca, G. Sticlaru, Free and nearly free curves vs. rational cuspidal plane curves. \textit{Publ. RIMS Kyoto Univ.} \textbf{54}: 163 -- 179 (2018).

\bibitem{3syz} A. Dimca and G. Sticlaru, Plane curves with three syzygies, minimal Tjurina curves curves, and nearly cuspidal curves. \textit{Geom. Dedicata} \textbf{207}: 29 -- 49 (2020).

\bibitem{maxTjurina} A. Dimca, G. Sticlaru, Jacobian syzygies, Fitting ideals, and plane curves with maximal global Tjurina numbers, Collect. Math. 
73 (2022), 391--409.

%\bibitem{Brian} A. Dimca, G. Sticlaru, Plus-one generated curves, Briançon-type polynomials and eigenscheme ideals,   Results Math.(2025) https://doi.org/10.1007/s00025-025-02371-z
 
%\bibitem{SameDeg} A. Dimca, G. Sticlaru, Curves with Jacobian syzygies of the same degree, Rendiconti del Circolo Matematico di Palermo Series 2, (2025) https://doi.org/10.1007/s12215-025-01203-x

\bibitem{HD}
A.~Dimca and G.~Sticlaru,
Bourbaki modules and the module of Jacobian derivations of projective hypersurfaces, Collectanea Mathematica
https://doi.org/10.1007/s13348-026-00509-y

\bibitem{Type3}
A.~Dimca and G.~Sticlaru,
On type three complex plane curves,
arXiv:2601.01824.

 

\bibitem{duPCTC} A.A. du Plessis,  C.T.C. Wall, Application of the theory of the discriminant
to highly singular plane curves, Math. Proc. Camb. Phil. Soc. 126
(1999) 256--266.

%\bibitem{duPCTC00} A.A. du Plessis,  C.T.C. Wall,  Singular hypersurfaces, versality and Gorenstein algebras, Jour. Alg. Geom. 9 (2000) 309--322.

\bibitem{duPCTC01} A.A. du Plessis,  C.T.C. Wall, Discriminants, vector fields and singular hypersurfaces. New developments in singularity theory (Cambridge, 2000), 351--377, NATO Sci. Ser. II Math. Phys. Chem., 21, Kluwer Acad. Publ., Dordrecht, 2001.

 \bibitem{duPCTC08} A.A. du Plessis,  C.T.C. Wall, Hypersurfaces with isolated singularities with symmetry. In: Saia, J.M., Seade, J. (eds.) Real and Complex Singularities. (Proceedings of the IX International Workshop). Contemp. Math. Amer. Math. Soc. 459, pp 147–164 (2008).

\bibitem{Eis} { D. Eisenbud}, \emph{The Geometry of Syzygies: A Second Course in Algebraic Geometry and Commutative Algebra}, Graduate Texts in Mathematics, Vol. 229, Springer 2005. 

\bibitem{E} Ph. Ellia,  Quasi complete intersections and global Tjurina number of plane curves, J. Pure Appl. Algebra 224 (2020),  423--431.

\bibitem{HS} S. H. Hassanzadeh, A. Simis, Plane Cremona maps: Saturation and regularity of the base ideal. \textit{J. Algebra} \textbf{371}: 620 -- 652 (2012).

\end{thebibliography}
\end{document}